\newcommand{\st}{\operatorname{st}}
\newenvironment{enumerate*}%
  {\begin{enumerate}[(I)]%
    \setlength{\itemsep}{10pt}%
    \setlength{\parskip}{0pt}}%
  {\end{enumerate}}
\newtheorem{theorem}{Theorem}[section]
\newtheorem{proposition}[theorem]{Proposition}
\newtheorem{corollary}[theorem]{Corollary}
\newtheorem{conjecture}[theorem]{Conjecture}
\newtheorem{question}[theorem]{Question}
\newtheorem{lemma}[theorem]{Lemma}
\theoremstyle{definition}
\newtheorem{definition}[theorem]{Definition}
\newtheorem{remark}[theorem]{Remark}
\newtheorem{example}[theorem]{Example}
\begin{document}

\title[]{Promotion Sorting} \keywords{}
\subjclass[2010]{}

\author[]{Colin Defant}
\address[]{Fine Hall, 304 Washington Rd., Princeton, NJ 08544}
\email{cdefant@princeton.edu}
\author[]{Noah Kravitz}
\address[]{Grace Hopper College, Zoom University at Yale, New Haven, CT 06510, USA}
\email{noah.kravitz@yale.edu}

\begin{abstract} 
Sch\"{u}tzenberger's promotion operator is an extensively-studied bijection that permutes the linear extensions of a finite poset. We introduce a natural extension $\partial$ of this operator that acts on all labelings of a poset. We prove several properties of $\partial$; in particular, we show that for every labeling $L$ of an $n$-element poset $P$, the labeling $\partial^{n-1}(L)$ is a linear extension of $P$. Thus, we can view the dynamical system defined by $\partial$ as a sorting procedure that sorts labelings into linear extensions. For all $0\leq k\leq n-1$, we characterize the $n$-element posets $P$ that admit labelings that require at least $n-k-1$ iterations of $\partial$ in order to become linear extensions. The case in which $k=0$ concerns labelings that require the maximum possible number of iterations in order to be sorted; we call these labelings \emph{tangled}. We explicitly enumerate tangled labelings for a large class of posets that we call inflated rooted forest posets. For an arbitrary finite poset, we show how to enumerate the sortable labelings, which are the labelings $L$ such that $\partial(L)$ is a linear extension. 
\end{abstract}
\maketitle

\section{Introduction}
\subsection{Background}

Let $P$ be an $n$-element poset with order relations denoted by $\leq_P$. A \emph{labeling} of $P$ is a bijection $L:P\to[n]$, where $[n]:=\{1,\ldots,n\}$. We say that a labeling $L$ is a \emph{linear extension} if $L(x)\leq L(y)$ whenever $x\leq_P y$. Let $\Lambda(P)$ denote the set of labelings of $P$, and let $\mathcal L(P)\subseteq\Lambda(P)$ be the set of linear extensions of $P$. 

In \cite{Schutzenberger1, Schutzenberger2, Schutzenberger3}, Sch\"utzenberger introduced a fascinating bijection from $\mathcal L(P)$ to $\mathcal L(P)$ known as \emph{promotion}. Subsequently, several researchers have investigated this map and found connections with various aspects of algebraic combinatorics and representation theory (see \cite{Edelman, Huang, Petersen, Rhoades, StanleyPromotion} and the references therein).   For most posets $P$, the dynamics of the promotion bijection on $\mathcal L(P)$ are not well understood. Indeed, there are only a few families of posets for which even the order of promotion is known. 

In this article, we introduce a natural extension of promotion, which we denote by $\partial$, to the set of all labelings of a finite poset. We will see that for every $n$-element poset $P$, the extended promotion map $\partial:\Lambda(P)\to\Lambda(P)$ has the following basic properties:
\begin{enumerate}
\item The restriction of $\partial:\Lambda(P)\to\Lambda(P)$ to $\mathcal L(P)$ is the usual promotion bijection on $\mathcal L(P)$.
\item We have $\partial^{n-1}(\Lambda(P))=\mathcal L(P)$. 
\end{enumerate}
Thus, the set of periodic points of $\partial$ is precisely the set of linear extensions of $P$. Extended promotion also serves as a vast generalization of the bubble sort map ${\bf B}$ studied in \cite{Chung} and \cite[pages 106--110]{Knuth2} (see Section~\ref{Sec:Basic}). 

One can view a linear extension of an $n$-element poset $P$ as a labeling that is ``sorted.'' Property (2) tells us that if we iteratively apply the extended promotion map to a labeling of $P$, then we will eventually obtain a linear extension. Thus, we can view extended promotion as a type of sorting map. It is then natural to ask how long it takes to sort various labelings. We will be primarily interested in those labelings that require $n-1$ iterations to be sorted. Because these labeling are the farthest from being sorted, we call them \emph{tangled} labelings. More generally, for $k\leq n-2$, we will consider \emph{$k$-untangled} labelings, which are labelings $L$ such that $\partial^{n-k-2}(L)\not\in\mathcal L(P)$. In this language, a labeling is tangled if and only if it is $0$-untangled.  

Let us remark that, for other noninvertible combinatorial dynamical systems, there has been a substantial amount of work aimed at understanding the elements that require the maximum, or close to the maximum, number of iterations to reach a periodic point. For the bubble sort map, this problem was investigated by Knuth \cite[pages 106--110]{Knuth2}. West and Claesson--Dukes--Steingr\'imsson studied permutations requiring close to the maximum number of iterations to be sorted by West's stack-sorting map \cite{Claessonn-4, West}. Ungar gave a rather nontrivial argument proving that the maximum number of iterations needed to sort a permutation in $S_n$ using the pop-stack-sorting map is $n-1$ \cite{Ungar}, and the permutations requiring $n-1$ iterations were investigated further by Asinowski, Banderier, and Hackl \cite{Asinowski}. Toom \cite{Toom} asked for the maximum number of iterations of the Bulgarian solitaire map that are needed to send a partition to a periodic point, and Hobby and Knuth posed a specific conjecture concerning this maximum in \cite{Hobby}. Bentz \cite{Bentz} and Igusa \cite{Igusa} independently proved this conjecture, and Etienne \cite{Etienne} generalized their solution. Griggs and Ho studied the analogous problem for the Carolina solitaire map \cite{Griggs}. 

It is also natural to ask about the image of $\partial$ and the set of sortable labelings, which we define to be the labelings $L$ such that $\partial(L)$ is a linear extension. In Section~\ref{sec:sortable}, we show that a labeling $L$ of an $n$-element poset $P$ is in the image of $\partial$ if and only if $L^{-1}(n)$ is a maximal element of $P$. We also show how to describe the preimages of an arbitrary labeling under $\partial$. This allows us to give a simple expression for the number of sortable labelings of an arbitrary poset.  

\subsection{Outline and Summary of Main Results}

In Section~\ref{Sec:Basic}, we define the extended promotion map $\partial:\Lambda(P)\to\Lambda(P)$ for every finite poset $P$. We show that one can also describe this map in terms of generalizations of the Bender-Knuth toggles that Haiman and Malvenuto--Reutenauer used to study classical promotion. We also show that a poset $P$ admits a $k$-untangled labeling (meaning $\partial^{n-k-2}(\Lambda(P))\neq\mathcal L(P)$) if and only if $P$ contains a lower order ideal of size $k+2$ that is not an antichain. In Section~\ref{Sec:Enumeration}, we explicitly enumerate the tangled labelings of a large class of posets that we call \emph{inflated rooted forest posets}. This collection of posets contains every poset whose connected components each have unique minimal elements.  In Section ~\ref{sec:sortable}, we show how to enumerate the preimages of a labeling under $L$ and give an expression for the number of sortable labelings of an arbitrary poset. In Section~\ref{sec:conclusion}, we raise some open questions for future inquiry.

\subsection{Notation and Terminology}\label{Subsec:Notation}
A \emph{lower order ideal} of a poset $P$ is a subset $Q\subseteq P$ such that for every $x\in Q$ and every $x'\in P$ with $x'\leq_P x$, we have $x'\in Q$. Similarly, an \emph{upper order ideal} of $P$ is a subset $Q\subseteq P$ such that for every $x\in Q$ and every $x'\in P$ with $x\leq_P x'$, we have $x'\in Q$.  Note that $Q$ is a lower order ideal if and only if $P \setminus Q$ is an upper order ideal.  Given two elements $x,y\in P$, we say $y$ \emph{covers} $x$ if $x<_P y$ and there does not exist $z\in P$ satisfying $x<_P z<_P y$. A poset is \emph{connected} if its Hasse diagram is connected as a graph. The \emph{connected components} of $P$ are the maximal connected subposets of $P$ (i.e., the subposets formed by the connected components of the Hasse diagram of $P$).    

Suppose $P$ is an $n$-element poset and $f:P\to\mathbb Z$ is an injective function. The \emph{standardization} of $f$, denoted $\st(f)$, is the unique labeling $L: P \to [n]$ such that for all $x,y\in P$, we have $L(x)<L(y)$ if and only if $f(x)<f(y)$. Equivalently, if $g:f(P)\to [n]$ is an order-preserving bijection, then $\st(f)=g\circ f$. Once we define the extended promotion map $\partial$, it will be convenient to write $L_{\gamma}=\partial^{\gamma}(L)$ for every labeling $L$ and every integer $\gamma\geq 0$ (where $L_0=L$).

\section{Properties of Extended Promotion}\label{Sec:Basic}

Let $L:P\to[n]$ be a labeling of an $n$-element poset $P$. Suppose $x\in P$ is not a maximal element. This means that there are elements of $P$ that are greater than $x$; among all such elements, let $y$ be the one with the smallest label (i.e., the one that minimizes $L(y)$). We call $y$ the \emph{$L$-successor} of $x$. Notice that $y$ does not necessarily cover $x$ in $P$. However, if $L$ is a linear extension, then $y$ does necessarily cover $x$. Now let $v_1=L^{-1}(1)$. If $v_1$ is not a maximal element of $P$, then let $v_2$ be its $L$-successor. If $v_2$ is not maximal, let $v_3$ be its $L$-successor. Continue in this fashion until obtaining an element $v_m$ that is maximal. We define the labeling $\partial(L)\in\Lambda(P)$ by 
\[\partial(L)(x)=\begin{cases} L(x)-1, & \mbox{if } x\not\in\{v_1,\ldots,v_m\}; \\ L(v_{i+1})-1, & \mbox{if } x=v_i\text{ for some }i\in\{1,\ldots,m-1\}; \\ n, & \mbox{if } x=v_m. \end{cases}\]

It is immediate from our definition that $\partial(L)$ is indeed a labeling of $P$. In the case in which $L$ is a linear extension, our definition agrees with the usual definition of promotion. The chain of elements $v_1<_P\cdots<_P v_m$ is called the \emph{promotion chain of $L$}. When $L$ is a linear extension, the promotion chain is a maximal chain; this is not necessarily the case when $L\not\in\mathcal L(P)$.  (It is also possible to define an extended promotion operator in which we require the $L$-successor of $x$ to cover $x$, but this variant lacks many of the interesting properties of the version that we study here.)

Let $a$ be the largest integer in $\{0,\ldots,n\}$ such that for all $j\in\{n-a+1,\ldots,n\}$, the set $\{x\in P: j\leq L(x)\leq n\}$ forms an upper order ideal in $P$. We say an element $x\in P$ is \emph{frozen with respect to $L$} (or just \emph{frozen} if the labeling $L$ is understood) if $n-a+1\leq L(x)\leq n$. In particular, the set $F$ of frozen elements is an upper order ideal. Notice that the standardization of $L\vert_F$ is a linear extension of the subposet of $P$ induced by $F$. Furthermore, $L$ is a linear extension if and only if $F=P$. 

\begin{example}\label{Exam1}
Consider the following two applications of extended promotion: 
\[\begin{array}{l}\includegraphics[height=4cm]{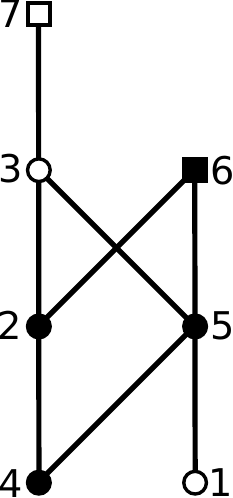}\end{array}\xrightarrow{\,\,\,\partial\,\,\,}\begin{array}{l}\includegraphics[height=4cm]{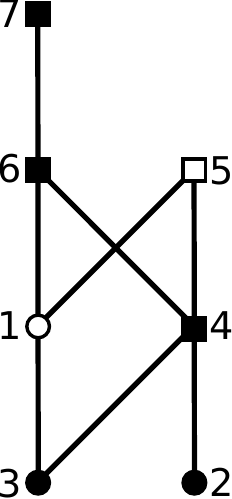}\end{array}\xrightarrow{\,\,\,\partial\,\,\,}\begin{array}{l}\includegraphics[height=4cm]{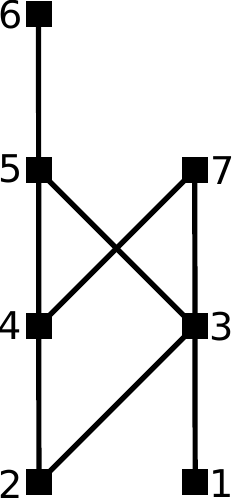}\end{array}\] In each of the first two labelings, we have colored the elements in the promotion chain white. For each labeling, the frozen elements are represented by squares, while the other elements are represented by circles.  
\end{example}

There is a useful alternative description of promotion, discovered independently by Haiman \cite{Haiman} and Malvenuto--Reutenauer \cite{Malvenuto}, that makes use of certain toggle operators (also called Bender-Knuth involutions) that act on linear extensions. We can easily extend these toggle operators $\tau_i$ ($i \in [n-1]$) to arbitrary labelings in order to obtain an alternative description of extended promotion. Suppose $L:P\to[n]$ is a labeling of $P$, and let $i\in[n-1]$. If $L^{-1}(i)<_P L^{-1}(i+1)$, let $\tau_i(L)=L$. Otherwise, let $\tau_i(L):P\to[n]$ be the labeling of $P$ defined by \[\tau_i(L)(x)=\begin{cases} L(x), & \mbox{if } L(x)\not\in\{i,i+1\}; \\ i+1, & \mbox{if } L(x)=i; \\ i, & \mbox{if } L(x)=i+1. \end{cases}\]

\begin{proposition}\label{Prop1}
Let $P$ be an $n$-element poset, and let $L:P\to[n]$ be a labeling of $P$. Then \[\partial(L)=(\tau_{n-1}\circ\cdots\circ \tau_2\circ \tau_1)(L).\] 
\end{proposition}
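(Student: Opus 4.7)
The plan is to trace the cascade $\tau_1, \tau_2, \ldots, \tau_{n-1}$ one toggle at a time and verify that the result agrees with $\partial(L)$ on every element of $P$. Let $v_1 <_P v_2 <_P \cdots <_P v_m$ be the promotion chain of $L$, set $\ell_i = L(v_i)$ (so $1 = \ell_1 < \ell_2 < \cdots < \ell_m$), and write $L^{(j)} := (\tau_j \circ \cdots \circ \tau_1)(L)$, with $L^{(0)} = L$; the goal is to show $L^{(n-1)} = \partial(L)$.

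The key preparatory observation, immediate from the definition of the promotion chain, is that for each $i \in \{1,\ldots,m\}$ and each $k$ with $\ell_i < k < \ell_{i+1}$ (using the convention $\ell_{m+1} := n+1$), the element $L^{-1}(k)$ is not above $v_i$ in $P$; otherwise $v_{i+1}$ would not be the above-$v_i$ element with smallest $L$-label. Using this, I would prove by induction on $j$ the following description of $L^{(j)}$: letting $i(j)$ be the largest index with $\ell_{i(j)} \leq j+1$, the labeling $L^{(j)}$ assigns label $\ell_{k+1} - 1$ to $v_k$ for each $k < i(j)$, label $j + 1$ to $v_{i(j)}$, label $L(x) - 1$ to each non-chain element $x$ with $\ell_{i(j)} < L(x) \leq j+1$, and label $L(x)$ to every other $x$. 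The base case $j=0$ matches the hypothesis trivially with $i(0) = 1$.

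The inductive step splits into two cases, each forced by the observation. When $j+1 < \ell_{i(j-1)+1}$, labels $j$ and $j+1$ sit respectively at $v_{i(j-1)}$ and at the non-chain element $L^{-1}(j+1)$; by the observation the latter is not above the former, so $\tau_j$ swaps, the ``traveling'' label advances, and $i(j) = i(j-1)$. When $j+1 = \ell_{i(j-1)+1}$, labels $j$ and $j+1$ sit at the consecutive chain elements $v_{i(j-1)}$ and $v_{i(j-1)+1}$, and the relation $v_{i(j-1)} <_P v_{i(j-1)+1}$ forces $\tau_j$ to act trivially, while $i(j) = i(j-1)+1$. Once $i(j)$ reaches $m$, the element $v_m$ is maximal, so every remaining toggle swaps, ultimately placing label $n$ at $v_m$. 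Setting $j = n-1$, the description reads: $v_k$ carries $L(v_{k+1})-1$ for $k < m$, $v_m$ carries $n$, and every non-chain $x$ carries $L(x) - 1$, which is exactly $\partial(L)$. The main obstacle is choosing a clean inductive invariant that simultaneously tracks the frozen chain labels, the traveling label, and the shifted non-chain labels; once that invariant is in place the two cases of the inductive step are essentially automatic.
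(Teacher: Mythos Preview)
Your approach is essentially the same as the paper's: both arguments trace the composition $\tau_{n-1}\circ\cdots\circ\tau_1$ one toggle at a time, the paper via the word $L^{-1}(1)\cdots L^{-1}(n)$ (where successive blocks of toggles become cyclic shifts of subwords) and you via an explicit invariant on the labels.  The strategy is sound, but your invariant is misstated in a way that breaks the argument as written.

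You assert that in $L^{(j)}$ a non-chain element $x$ carries $L(x)-1$ precisely when $\ell_{i(j)}<L(x)\le j+1$, and carries $L(x)$ otherwise.  In fact every non-chain element with $L(x)\le j+1$ has already been decremented, regardless of whether $L(x)$ exceeds $\ell_{i(j)}$.  For a concrete failure take $\ell_1=1$, $\ell_2=3$, and $j=2$: then $i(2)=2$, and your invariant says the non-chain element $L^{-1}(2)$ (which has $L(x)=2<\ell_{i(2)}=3$) keeps label~$2$; but after $\tau_1$ it received label~$1$ and has held it ever since.  Consequently, at $j=n-1$ your stated invariant would only give $L(x)-1$ to non-chain $x$ with $L(x)>\ell_m$, not to all of them, so the final read-off would not match $\partial(L)$.

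The repair is minor: replace the range $\ell_{i(j)}<L(x)\le j+1$ by $1<L(x)\le j+1$ (equivalently $L(x)\le j+1$, since $L^{-1}(1)=v_1$).  With this corrected invariant, both cases of your inductive step go through exactly as you describe, and at $j=n-1$ every non-chain element indeed carries $L(x)-1$, yielding $\partial(L)$.
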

\begin{proof}
Our proof is an immediate adaptation of the proof given in \cite{StanleyPromotion} for the case in which $L$ is a linear extension. Let us associate each labeling $L':P\to[n]$ with the word $(L')^{-1}(1)\cdots (L')^{-1}(n)$. Let $u_1\cdots u_n$ be the word associated to $L$ (so $L(u_i)=i$ for all $i\in[n]$). Let $\{v_1<_P\cdots<_P v_m\}$ be the promotion chain of the labeling $L$.  Note that $v_1=u_1$. Suppose $m\geq 2$, and let $v_2=u_{r_2}$. By the definition of $v_2$, none of the elements $u_2,\ldots,u_{r_2-1}$ is greater than $v_1$ in $P$. One can now check directly that the word associated to $(\tau_{r_2-1}\circ\cdots\circ \tau_1)(L)$ is $u_2u_3\cdots u_{r_2-1}u_1u_{r_2}u_{r_2+1}\cdots u_n$, which is obtained from $u_1\cdots u_n$ by cyclically shifting the prefix of length $r_2-1$ by $1$ to the left. If $m\geq 3$ and $v_3=u_{r_3}$, then a similar argument shows that the word associated to $(\tau_{r_3-1}\circ\cdots\circ \tau_1)(L)$ is obtained from the word associated to $(\tau_{r_2-1}\circ\cdots\circ\tau_1)(L)$ by cyclically shifting the subword $u_{r_2}\cdots u_{r_3-1}$ by $1$ to the left. Continue in this fashion until reaching $v_m=u_{r_m}$. The word associated to $(\tau_{n-1}\circ\cdots\circ \tau_1)(L)$ is obtained from the word associated to $(\tau_{r_m-1}\circ\cdots\circ \tau_1)(L)$ by cyclically shifting the suffix $u_{r_m}\cdots u_n$ by $1$ to the left. One immediately checks that this agrees with our original definition of extended promotion. 
\end{proof}

\begin{example}
Applying the toggle operators $\tau_1,\ldots,\tau_6$ to the leftmost labeling shown in Example~\ref{Exam1} yields the sequence 
\[\begin{array}{l}\includegraphics[height=4cm]{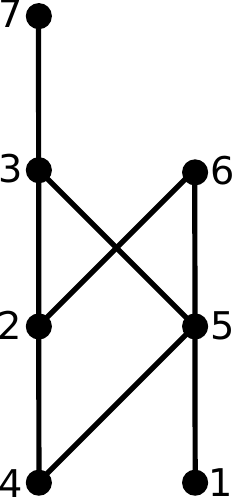}\end{array}\xrightarrow{\,\,\,\tau_1\,\,\,}\begin{array}{l}\includegraphics[height=4cm]{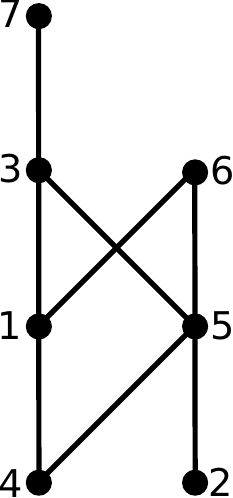}\end{array}\xrightarrow{\,\,\,\tau_2\,\,\,}\begin{array}{l}\includegraphics[height=4cm]{PromotionPIC5}\end{array}\xrightarrow{\,\,\,\tau_3\,\,\,}\begin{array}{l}\includegraphics[height=4cm]{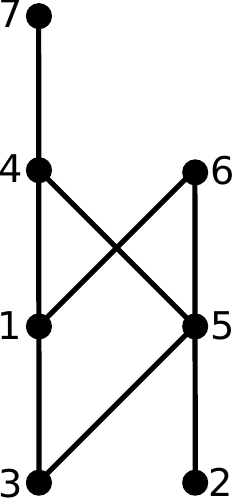}\end{array}\]

\[\xrightarrow{\,\,\,\tau_4\,\,\,}\begin{array}{l}\includegraphics[height=4cm]{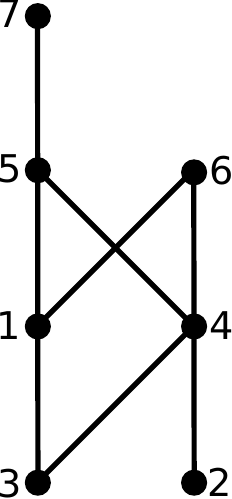}\end{array}\xrightarrow{\,\,\,\tau_5\,\,\,}\begin{array}{l}\includegraphics[height=4cm]{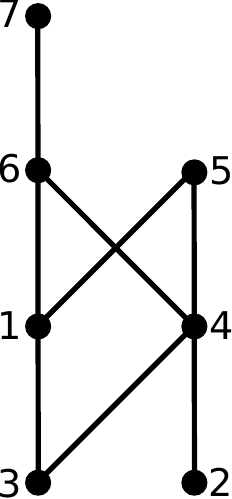}\end{array}\xrightarrow{\,\,\,\tau_6\,\,\,}\begin{array}{l}\includegraphics[height=4cm]{PromotionPIC8}\end{array}.\] The labeling at the end of this sequence agrees with the labeling $\partial(L)$ shown in the middle in Example~\ref{Exam1}, as guaranteed by Proposition~\ref{Prop1}.  
\end{example}

\begin{remark}\label{Rem2}
Using Proposition~\ref{Prop1}, one can show that when $P=\{x_1<_P\cdots<_P x_n\}$ is an $n$-element chain, the extended promotion map $\partial:\Lambda(P)\to\Lambda(P)$ is dynamically equivalent to the bubble sort map {\bf B} studied in \cite{Chung} and \cite[pages 106--110]{Knuth2}. To be more precise, let $S_n$ denote the set of permutations of $[n]$, and let us associate each labeling $L\in\Lambda(P)$ with the permutation $\sigma(L)=L(x_1)\cdots L(x_n)\in S_n$. Let $\widehat\sigma(L)=\sigma(L)^{-1}$ be the inverse of $\sigma(L)$; that is, $\widehat\sigma(L)$ is the permutation in $S_n$ in which $i$ is in position $L(x_i)$ for all $i\in[n]$. Then one can show that $\widehat\sigma(\partial(L))={\bf B}(\widehat\sigma(L))$. 
\end{remark}

We now record a few facts about the interaction between extended promotion and frozen elements.

\begin{lemma}\label{Lem2}
Let $L:P\to[n]$ be a labeling of an $n$-element poset $P$.  Let $F_0$ be the set of elements of $P$ that are frozen with respect to $L$, and let $F_1$ be the set of elements of $P$ that are frozen with respect to $L_1=\partial(L)$. If $L$ is not a linear extension, then $F_0$ is a proper subset of $F_1$. 
\end{lemma}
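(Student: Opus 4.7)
The plan is to locate a single element $w$ in the promotion chain of $L$ for which $F_0 \cup \{w\} \subseteq F_1$; since $w \notin F_0$, this will give the strict inclusion. Write $a = |F_0|$, so that $F_0 = L^{-1}(\{n-a+1,\ldots,n\})$. Because $F_0$ is an upper order ideal and $v_{i+1} >_P v_i$ for each $i$, once the promotion chain $v_1 <_P \cdots <_P v_m$ of $L$ enters $F_0$ it stays there. Combined with $v_1 = L^{-1}(1) \notin F_0$ (using $a \leq n-1$, which follows from the hypothesis that $L$ is not a linear extension), this lets me define $w$ to be the last element of the promotion chain not in $F_0$. Then either $w = v_m$ (if the chain avoids $F_0$ entirely) or the $L$-successor $v_j$ of $w$ is the first chain element lying in $F_0$.

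I first verify that $F_0 \cup \{w\}$ is an upper order ideal of $P$. If $w = v_m$ this is immediate, since $v_m$ is maximal in $P$. Otherwise any $y >_P w$ satisfies $L(y) \geq L(v_j) \geq n-a+1$, so $y \in F_0$. Next I observe that $\st(L|_{F_0 \cup \{w\}})$ is a linear extension of the subposet induced by $F_0 \cup \{w\}$: because $F_0$ is upper closed, no element of $F_0$ can lie below $w$, so $w$ is a minimal element of $F_0 \cup \{w\}$ and receives the smallest standardized label, while $\st(L|_{F_0})$ is already a linear extension of the subposet induced by $F_0$.

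The crux of the argument is to show that $\st(L_1|_{F_0 \cup \{w\}})$ agrees with the classical promotion applied to the linear extension $\st(L|_{F_0 \cup \{w\}})$ of the subposet $F_0 \cup \{w\}$. The key point is that, since $F_0 \cup \{w\}$ is an upper order ideal of $P$, the $L$-successor of any chain element computed in the full poset $P$ coincides with its $L$-successor computed in the subposet $F_0 \cup \{w\}$. Hence the two promotion procedures traverse the same chain $w, v_j, \ldots, v_m$ and assign the same new labels, up to the uniform shift by $n-a-1$ that converts standardized labels back to labels in $\{n-a, \ldots, n\}$. Because classical promotion preserves linear extensions, $\st(L_1|_{F_0 \cup \{w\}})$ is again a linear extension of $F_0 \cup \{w\}$.

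The conclusion then follows quickly. For each $k \in \{1,\ldots,a+1\}$, the set $L_1^{-1}(\{n-k+1,\ldots,n\})$ is an upper order ideal of $F_0 \cup \{w\}$, and hence of $P$ (since $F_0 \cup \{w\}$ is itself an upper order ideal of $P$). Thus $|F_1| \geq a+1$ and $F_0 \cup \{w\} = L_1^{-1}(\{n-a,\ldots,n\}) \subseteq F_1$; because $w \notin F_0$, the inclusion $F_0 \subseteq F_1$ is strict. The main obstacle is the verification in the third paragraph that the global operator $\partial$ restricts correctly to classical promotion on the upper order ideal $F_0 \cup \{w\}$; this requires careful bookkeeping of the $L$-successors and the new label values along the chain, even though each individual step is routine.
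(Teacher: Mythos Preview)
Your argument is correct. Both you and the paper identify the same element (the paper calls it $z$, characterized as the unique unfrozen element with $L_1$-label at least $n-a$; you call it $w$, the last promotion-chain element outside $F_0$), show that $F_0\cup\{w\}$ is an upper order ideal, and then verify that every element of $F_0\cup\{w\}$ is frozen with respect to $L_1$.

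The difference lies in that last verification. The paper proceeds by a direct case analysis: for $u<_P v$ in $F_0\cup\{w\}$, it checks $L_1(u)<L_1(v)$ according to whether $u$, $v$, both, or neither lie in the promotion chain. You instead observe that $\st(L\vert_{F_0\cup\{w\}})$ is a linear extension of the induced subposet, that the action of $\partial$ on $L$ restricts (after standardization) to classical promotion on this upper order ideal, and then invoke the fact that classical promotion preserves linear extensions. Your route is a bit more structural and packages the case analysis into the single check that $L$-successors in $P$ and in $F_0\cup\{w\}$ coincide for the relevant chain elements; the paper's route is more hands-on but avoids the extra layer of standardization bookkeeping. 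The two verifications are of comparable length and difficulty, so this is a stylistic rather than a substantive divergence.
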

\begin{proof}
Let $a=|F_0|$, and let $U_0=P\setminus F_0$ be the set of elements of $P$ that are unfrozen with respect to $L$. Assume $L$ is not a linear extension, meaning that $a<n$. By the definition of a frozen element, we must have $F_0=\{x\in P:n-a+1\leq L(x)\leq n\}$. We first prove that the set $X=\{x\in P:n-a\leq L_1(x)\leq n\}$ is an upper order ideal of $P$. Note that $L_{1}^{-1}(1), \ldots, L_{1}^{-1}(n-a-1) \in U_0$.  We claim that the unique element $z \in U_0$ with $L_{1}(z)\geq n-a$ must be a maximal element of the subposet induced by $U_0$.  Indeed, $z$ must be in the promotion chain of $L$, and it must be the case either that $z$ is maximal (in $P$) or that the $L$-successor of $z$ is an element of $F_0$.  If $z$ is maximal in $P$, then certainly $z$ is maximal in $U_0$, as desired.  Otherwise, the definition of the $L$-successor guarantees that every element greater (in $P$) than $z$ is an element of $F_0$, i.e., $z$ is maximal in $U_0$.  We thus conclude that $U_0 \setminus \{z\}$ is a lower order ideal of $P$, which is equivalent to the statement that $X$ is an upper order ideal of $P$. 

We have $X=F_0\cup\{z\}$, so the lemma will follow if we can show that the elements of $X$ are all frozen with respect to $L_1$. Since $X$ is an upper order ideal, we simply need to show that for all $u,v\in X$ such that $u<_P v$, we have $L_1(u)<L_1(v)$. Choose $u,v\in X$ with $u<_P v$. The element $z$ is a minimal element of the subposet induced by $X$, and $v$ is not minimal in $X$ because $u<_P v$. This proves that $v\in X\setminus\{z\}=F_0$, so $v$ is frozen with respect to $L$. By the definition of a frozen element (and the assumption that $u<_P v$), we have $L(u)<L(v)$. If $u$ is not in the promotion chain of $L$, then $L_1(u)=L(u)-1<L(v)-1\leq L_1(v)$, as desired. If $u$ and $v$ are both in the promotion chain, then certainly $L_1(u)<L_1(v)$. Finally, suppose $u$ is in the promotion chain but $v$ is not. Let $u'$ be the $L$-successor of $u$. Since $u<_P v$, the definition of the $L$-successor guarantees that $L(u')<L(v)$. Therefore, $L_1(u)=L(u')-1<L(v)-1=L_1(v)$. 
\end{proof}

\begin{lemma}\label{lem:ideals}
Let $L:P\to[n]$ be a labeling of an $n$-element poset $P$.  For every $0 \leq \gamma \leq n$, the elements $L_\gamma^{-1}(n-\gamma+1),\ldots,L_\gamma^{-1}(n)$ are frozen with respect to $L_\gamma$. 
\end{lemma}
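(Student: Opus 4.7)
The plan is to prove this by induction on $\gamma$, with Lemma~\ref{Lem2} providing the essential engine. The base case $\gamma=0$ is vacuous, since the asserted list $L_0^{-1}(n+1),\ldots,L_0^{-1}(n)$ is empty.

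For the inductive step, I observe first a convenient equivalent reformulation: by the definition of frozen, if $F_\gamma$ denotes the set of elements of $P$ that are frozen with respect to $L_\gamma$, then $F_\gamma$ is exactly the set of elements carrying the top $|F_\gamma|$ labels under $L_\gamma$. Hence the statement ``the elements $L_\gamma^{-1}(n-\gamma+1),\ldots,L_\gamma^{-1}(n)$ are frozen with respect to $L_\gamma$'' is equivalent to the inequality $|F_\gamma| \geq \gamma$. I will prove this inequality inductively.

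Assume $|F_\gamma|\geq\gamma$ and split into two cases. If $L_\gamma\in\mathcal L(P)$, then $F_\gamma=P$, and since the restriction of $\partial$ to $\mathcal L(P)$ is classical promotion (hence a bijection $\mathcal L(P)\to\mathcal L(P)$), we get $L_{\gamma+1}\in\mathcal L(P)$ and thus $|F_{\gamma+1}|=n\geq\gamma+1$. If $L_\gamma\notin\mathcal L(P)$, then Lemma~\ref{Lem2} yields $F_\gamma\subsetneq F_{\gamma+1}$, so $|F_{\gamma+1}|\geq|F_\gamma|+1\geq\gamma+1$. In either case $|F_{\gamma+1}|\geq\gamma+1$, completing the induction.

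I do not expect any serious obstacle: the entire argument is a bookkeeping application of Lemma~\ref{Lem2}, whose strict-growth statement is precisely tailored to this kind of inductive conclusion. The only subtlety worth flagging is the reformulation step at the start, namely the observation that ``the top $\gamma$ labels are frozen'' is the same thing as ``$|F_\gamma|\geq\gamma$''; this reformulation is what lets one feed Lemma~\ref{Lem2} directly into the induction without having to track individual elements. It is also worth noting that the lemma's conclusion in the case $\gamma=n$ gives $|F_n|=n$, i.e., $L_n\in\mathcal L(P)$, which already hints at property (2) from the introduction.
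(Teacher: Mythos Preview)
Your proof is correct and takes essentially the same approach as the paper: both argue that Lemma~\ref{Lem2} forces $|F_\gamma|\geq\gamma$, and then invoke the definition of frozen to conclude that the top $\gamma$ labels are frozen. Your version simply spells out the induction and the linear-extension case explicitly, whereas the paper compresses this into two sentences.
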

\begin{proof}
It follows immediately from Lemma~\ref{Lem2} that there are at least $\gamma$ elements of $P$ that are frozen with respect to $L_\gamma$. Therefore, the lemma follows from the definition of a frozen element.  
\end{proof}

The following proposition is now an immediate corollary of the preceding lemma. 
\begin{proposition}\label{Prop3}
For every $n$-element poset $P$, we have $\partial^{n-1}(\Lambda(P))=\mathcal L(P)$
\end{proposition}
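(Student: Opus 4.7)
The plan is to derive this as an essentially immediate corollary of Lemma~\ref{lem:ideals} at $\gamma = n-1$, together with the observation that having the top $n-1$ labels frozen already forces a labeling of an $n$-element poset to be a linear extension.

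Fix an arbitrary labeling $L \in \Lambda(P)$ and set $L_{n-1} = \partial^{n-1}(L)$. First, I would apply Lemma~\ref{lem:ideals} at $\gamma = n-1$ to conclude that the elements $L_{n-1}^{-1}(2), L_{n-1}^{-1}(3), \ldots, L_{n-1}^{-1}(n)$ are all frozen with respect to $L_{n-1}$. Unpacking the definition of ``frozen,'' this says that each level set $\{x \in P : L_{n-1}(x) \geq j\}$ is an upper order ideal of $P$ for every $j \in \{2, \ldots, n\}$.

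Next, I would verify directly that $L_{n-1}$ is a linear extension. Take any $x <_P y$ in $P$. If $L_{n-1}(x) \geq 2$, then the upper order ideal property for $j = L_{n-1}(x)$ forces $L_{n-1}(y) \geq L_{n-1}(x)$, and bijectivity of $L_{n-1}$ upgrades this to $L_{n-1}(y) > L_{n-1}(x)$. If instead $L_{n-1}(x) = 1$, then trivially $L_{n-1}(y) > 1 = L_{n-1}(x)$. This establishes the inclusion $\partial^{n-1}(\Lambda(P)) \subseteq \mathcal{L}(P)$.

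For the reverse inclusion, I would invoke property (1) noted in the introduction: the restriction of $\partial$ to $\mathcal{L}(P)$ coincides with Sch\"utzenberger's classical promotion, which is a bijection of $\mathcal{L}(P)$ onto itself. Iterating gives $\partial^{n-1}(\mathcal{L}(P)) = \mathcal{L}(P) \subseteq \partial^{n-1}(\Lambda(P))$, completing the proof. There is no genuine obstacle here, since all the dynamical work has already been done in Lemmas~\ref{Lem2} and~\ref{lem:ideals}; the only mildly subtle point is noticing that the freeze condition at $j = 1$ is automatic, so $n-1$ frozen elements are enough to force a linear extension.
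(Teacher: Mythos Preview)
Your proposal is correct and follows essentially the same route as the paper's proof: apply Lemma~\ref{lem:ideals} at $\gamma=n-1$ to freeze the top $n-1$ labels, deduce that $L_{n-1}$ is a linear extension, and then use the bijectivity of classical promotion on $\mathcal{L}(P)$ for the reverse inclusion. The only cosmetic difference is that the paper phrases the middle step as ``$L_{n-1}^{-1}(1)$ is minimal, hence $L_{n-1}\in\mathcal{L}(P)$'' (implicitly invoking the earlier remark that $L$ is a linear extension iff all elements are frozen), whereas you verify the order-preserving condition directly from the upper-order-ideal description; these are equivalent one-line arguments.
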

\begin{proof}
Let $L\in\Lambda(P)$. Set $\gamma=n-1$ in Lemma~\ref{lem:ideals} to see that all of the elements of $P$ with labels at least $2$ are frozen with respect to $L_{n-1}$.  It follows that $L_{n-1}^{-1}(1)$ is minimal in $P$, so we conclude that $L_{n-1}$ is a linear extension. As $L$ was arbitrary, $\partial^{n-1}(\Lambda(P))\subseteq\mathcal L(P)$. On the other hand, the restriction of $\partial$ to $\mathcal L(P)$ is the classical promotion map, which is a bijection from $\mathcal L(P)$ to $\mathcal L(P)$. Thus, $\mathcal L(P)=\partial^{n-1}(\mathcal L(P))\subseteq\partial^{n-1}(\Lambda(P))$. 
\end{proof}

Proposition~\ref{Prop3} motivates the following definition. 
\begin{definition}
Let $P$ be an $n$-element poset. A labeling $L:P\to[n]$ is \emph{$k$-untangled} if the labeling $L_{n-k-2}=\partial^{n-k-2}(L)$ is not a linear extension. We say that $L$ is \emph{tangled} if it is $0$-untangled. We make the convention that $L$ is not $k$-untangled if $n\leq k+1$; in particular, the unique labeling of a $1$-element poset is not tangled.   
\end{definition}

\begin{remark}\label{Rem1}
It follows immediately from Lemma~\ref{Lem2} that a labeling $L$ of an $n$-element poset $P$ is tangled if and only if there are exactly $\gamma$ elements of $P$ that are frozen with respect to $L_\gamma$ for all $0\leq\gamma\leq n-2$.  
\end{remark}

\begin{theorem}\label{Thm1}
Let $0\leq k\leq n-2$. An $n$-element poset $P$ has a $k$-untangled labeling if and only if it has a lower order ideal of size $k+2$ that is not an antichain.
\end{theorem}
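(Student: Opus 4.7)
The plan is to prove both implications, each via an explicit structural argument.

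For the forward direction, suppose $L$ is a $k$-untangled labeling and set $M := L_{n-k-2}$. By Lemma~\ref{lem:ideals}, the set $F$ of elements frozen with respect to $M$ satisfies $|F| \geq n-k-2$, so $U := P \setminus F$ is a lower order ideal of $P$ with $|U| \leq k+2$. Because $M$ assigns every element of $U$ a label smaller than every label assigned to an element of $F$, and because the standardization of $M|_F$ is a linear extension of the subposet $F$, any pair $x <_P y$ with $M(x) > M(y)$ must satisfy $x, y \in U$. Since $M \notin \mathcal L(P)$, such a pair exists, so $U$ is not an antichain. If $|U| < k+2$, enlarge $U$ to a lower order ideal of size exactly $k+2$ by repeatedly adjoining a minimal element of the complement; this is possible because $n \geq k+2$, and the enlarged ideal still contains the original violating pair and hence is not an antichain.

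For the backward direction, suppose $Q$ is a lower order ideal of size $k+2$ that is not an antichain. Fix a linear extension $w_1, \ldots, w_{k+2}$ of the induced subposet $Q$ and a linear extension $v_1, \ldots, v_{n-k-2}$ of the induced subposet $P \setminus Q$, and define $L : P \to [n]$ by $L(w_i) = n+1-i$ and $L(v_i) = i$. I will show that $L$ is $k$-untangled. The central step, proved by induction on $j$, is the claim that for every $0 \leq j \leq n-k-2$,
\[
L_j(w) = L(w) - j \quad \text{for all } w \in Q.
\]
The base case $j = 0$ is immediate. For the inductive step with $j \leq n-k-3$, the inductive hypothesis implies that the $L_j$-labels on $Q$ form the set $\{n-k-1-j, \ldots, n-j\}$, every element of which is at least $2$, so $L_j^{-1}(1) \in P \setminus Q$. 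Because $P \setminus Q$ is an upper order ideal, every element of $P$ strictly above an element of $P \setminus Q$ is again in $P \setminus Q$; hence the promotion chain of $L_j$ is entirely contained in $P \setminus Q$, so no element of $Q$ lies on it. Therefore $L_{j+1}(w) = L_j(w) - 1 = L(w) - (j+1)$ for all $w \in Q$. Setting $j = n-k-2$ yields $L_{n-k-2}(w_i) = k+3-i$ for each $i$; since $Q$ is not an antichain, we may pick $w_i <_P w_j$ with $i < j$, giving the violation $L_{n-k-2}(w_i) > L_{n-k-2}(w_j)$, so $L_{n-k-2} \notin \mathcal L(P)$.

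The main obstacle is arranging the backward construction so that $L|_Q$ remains under inductive control through all $n-k-2$ iterations of $\partial$. The key insight is that by placing the small labels $1, \ldots, n-k-2$ on the upper order ideal $P \setminus Q$, every promotion chain is trapped inside $P \setminus Q$, so that $L|_Q$ is uniformly shifted down by $1$ at each step, and the intended reverse-linear-extension violation on $Q$ surfaces precisely at step $n-k-2$.
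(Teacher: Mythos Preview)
Your proof is correct and follows essentially the same approach as the paper's. The only cosmetic differences are that in the forward direction the paper directly takes $Y=\{L_{n-k-2}^{-1}(1),\ldots,L_{n-k-2}^{-1}(k+2)\}$ (which already has size exactly $k+2$), avoiding your enlargement step, and in the backward direction the paper simply sets $L(x)=n$, $L(y)=n-1$ for a comparable pair $x<_P y$ in $Q$ and labels the rest of $Q$ arbitrarily, whereas you use a full reverse linear extension; the key inductive claim that the promotion chain stays in the upper order ideal $P\setminus Q$ is identical.
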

\begin{proof}
First, suppose that $P$ has a $k$-untangled labeling $L$, so that $L_{n-k-2} \notin \mathcal{L}(P)$.  For each $j \in [n]$, write $y_j=L_{n-k-2}^{-1}(j)$.  By Lemma~\ref{lem:ideals}, we know that the set $Y=\{y_1, \ldots, y_{k+2}\}$ is a lower order ideal of $P$ and that $L_{n-k-2}\vert_Y$ is not a linear extension.  In particular, $Y$ cannot be an antichain, since every labeling of an antichain is a linear extension.

Second, suppose $P$ has a lower order ideal $Q$ of size $k+2$ that is not an antichain.  We describe a $k$-untangled labeling $L$ of $P$ as follows.  Choose elements $x,y\in Q$ with $x<_P y$.  Let $L(x)=n$ and $L(y)=n-1$.  Assign the labels $n-k-1, n-k, \ldots, n-2$ to the remaining elements of $Q$ arbitrarily, and then assign the labels $1, 2, \ldots, n-k-2$ to the elements of $P \setminus Q$ arbitrarily.  The first $n-k-2$ promotions do not affect the relative order of the labels of the elements of $Q$; more precisely, every $z \in Q$ satisfies $L_{n-k-2}(z)=L(z)-(n-k-2)$.  In particular, we have $L_{n-k-2}(y)=k+1<k+2=L_{n-k-2}(x)$, which shows that $L$ is $k$-untangled.
\end{proof}

The $k=0$ case of Theorem~\ref{Thm1} says that $P$ admits a tangled labeling if and only if there is some element that is greater than exactly $1$ other element.

We end this section with one additional simple lemma.  

\begin{lemma}\label{Lem4}
Let $P$ be an $n$-element poset, and let $L\in\Lambda(P)$. For all $\gamma,j\in[n]$ with $j+\gamma\leq n$, the elements $L^{-1}(j+\gamma)$ and $L_\gamma^{-1}(j)$ belong to the same connected component of $P$.
\end{lemma}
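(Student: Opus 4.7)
The plan is to prove the lemma by induction on $\gamma$. The base case $\gamma=1$ is the substantive step, and the inductive step is then a simple chaining argument.

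For the base case, I would examine the three cases in the definition of $\partial(L)(x)$. Fix $j \leq n-1$ and let $y = L_1^{-1}(j)$. Since $j < n$, we are not in the case $y = v_m$, so either (i) $y \notin \{v_1,\ldots,v_m\}$, which forces $L(y) = j+1$, so $y = L^{-1}(j+1)$ and the two elements coincide; or (ii) $y = v_i$ for some $i \in \{1,\ldots,m-1\}$, in which case $L(v_{i+1}) - 1 = j$, so $v_{i+1} = L^{-1}(j+1)$. In case (ii) the cover relation $v_i <_P v_{i+1}$ (indeed, any relation in the promotion chain) witnesses that $L^{-1}(j+1)$ and $L_1^{-1}(j)$ lie in the same connected component of $P$.

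For the inductive step, suppose the lemma holds for $\gamma - 1$ applied to any labeling. Given $\gamma, j \in [n]$ with $j+\gamma \leq n$, apply the base case to $L$ with index $j+\gamma-1$ to conclude that $L^{-1}(j+\gamma)$ and $L_1^{-1}(j+\gamma-1)$ lie in the same component of $P$. Then apply the inductive hypothesis to the labeling $L_1$ with the pair $(\gamma-1, j)$ (noting that $(L_1)_{\gamma-1} = L_\gamma$) to conclude that $L_1^{-1}(j+\gamma-1)$ and $L_\gamma^{-1}(j)$ lie in the same component. Transitivity of ``lies in the same connected component'' finishes the argument.

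The main (and only) obstacle is making sure one correctly dispatches the three branches in the definition of $\partial$ in the base case; the key observation is simply that the restriction $j+\gamma \leq n$ rules out the branch producing label $n$, and that every relation consumed by the argument (either equality of elements or a comparison along the promotion chain) stays inside a single connected component.
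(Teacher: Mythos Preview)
Your proof is correct and follows essentially the same approach as the paper: establish the one-step case $\gamma=1$ directly from the definition of $\partial$, then chain via transitivity. The paper simply asserts the base case is ``immediate from the definition,'' whereas you spell out the case analysis; one minor quibble is that $v_i <_P v_{i+1}$ need not be a cover relation in extended promotion, but you only use comparability, so the argument goes through.
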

\begin{proof}
It is immediate from the definition of $\partial$ that $L^{-1}(j+\gamma)$ and $L_1^{-1}(j+\gamma-1)$ are in the same connected component of $P$. Similarly, $L_1^{-1}(j+\gamma-1)$ and $L_2^{-1}(j+\gamma-2)$ are in the same connected component. Hence, $L^{-1}(j+\gamma)$ is in the same connected component as $L_2^{-1}(j+\gamma-2)$. Continuing in this fashion, we eventually obtain the desired result.   
\end{proof}

\section{Tangled Labelings of Inflated Rooted Forests}\label{Sec:Enumeration}

In this section, we explicitly enumerate the tangled labelings of a large class of posets that we call inflated rooted forests. 

\begin{definition}
A \emph{rooted forest poset} is a poset in which each element is covered by at most one other element. A \emph{rooted tree poset} is a connected rooted forest poset. 
\end{definition}

Note that a rooted tree poset is just a poset whose Hasse diagram is a rooted tree in which the root is the unique maximal element. A rooted forest poset is a poset whose connected components are rooted tree posets. The following notion of inflation is a generalization of subdivision of edges in a graph. 

\begin{definition}\label{Def1}
Let $Q$ be a finite poset. An \emph{inflation} of $Q$ is a poset $P$ such that there exists a surjective map $\varphi:P\to Q$ with the following properties: 
\begin{enumerate}
\item For each $v\in Q$, the subposet $\varphi^{-1}(v)$ of $P$ has a unique minimal element.  
\item If $x,y\in P$ are such that $\varphi(x)\neq\varphi(y)$, then $x<_P y$ if and only if $\varphi(x)<_Q\varphi(y)$. 
\end{enumerate}
An \emph{inflated rooted forest poset} is a poset that is the inflation of a rooted forest poset. An \emph{inflated rooted tree poset} is a poset that is the inflation of a rooted tree poset. 
\end{definition}

\begin{figure}[ht]
\begin{center}
\includegraphics[height=5.7cm]{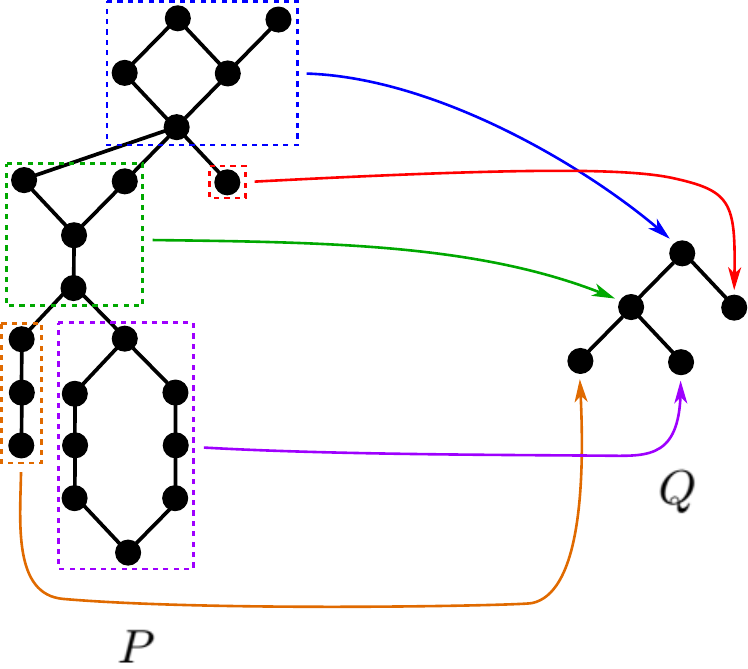}
\caption{The poset $P$ is an inflated rooted tree poset because it is an inflation of the rooted tree poset $Q$. The arrows represent the map $\varphi$ from Definition~\ref{Def1}, and the subsets of $P$ encompassed by the dotted boxes are the preimage sets $\varphi^{-1}(v)$ for $v\in Q$.}
\label{Fig1}
\end{center}  
\end{figure}

\begin{remark}
If $P$ is an inflation of $Q_1$ and the Hasse diagram of $Q_1$ is a subdivision of the Hasse diagram of $Q_2$, then $P$ is an inflation of $Q_2$.  In particular, every inflated rooted tree poset can be obtained as the inflation of a rooted tree poset where no vertex has exactly $1$ child; we will call such a rooted tree poset $Q$ \emph{reduced}.  For the remainder of this section, it will be convenient always to choose $Q$ reduced.
\end{remark}

The following theorem allows us to reduce the problem of enumerating tangled labelings of a poset to the case in which the poset is connected. (The same argument, together with the Principle of Inclusion-Exclusion, can be used to obtain a fairly messy analogous result for $k$-untangled labelings.) In what follows, recall the definition of the standardization map $\st$ from Section~\ref{Subsec:Notation}.

\begin{theorem}\label{thm:connectedreduction}
Let $P$ be an $n$-element poset with connected components $P_1, \ldots, P_r$.  Let $n_i=|P_i|$, and let $t_i$ denote the number of tangled labelings of $P_i$.  A labeling $L$ of $P$ is tangled if and only if there is some $i\in[r]$ such that $\st(L\vert_{P_i})$ is a tangled labeling of $P_i$ and $L^{-1}(n-1),L^{-1}(n) \in P_i$.  Consequently, the number of tangled labelings of $P$ is
\[(n-2)!\sum_{i=1}^r \frac{t_i}{(n_i-2)!}.\]
\end{theorem}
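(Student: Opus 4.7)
The plan is to reduce the dynamics of $\partial$ on $P$ to the dynamics of the extended promotion operators $\partial_{P_j}$ acting on the individual components via standardization, and then to enumerate tangled labelings componentwise. The main obstacle is the reduction step; once it is established, the biconditional and the count both follow easily.

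The key identity I would establish is
\[
\st(L_\gamma\vert_{P_j})=\partial_{P_j}^{a_j(\gamma)}\bigl(\st(L\vert_{P_j})\bigr),\qquad a_j(\gamma):=\bigl|L(P_j)\cap[\gamma]\bigr|,
\]
for every $j\in[r]$ and every $0\le\gamma\le n-1$. To prove it, I would first observe that the promotion chain of any labeling is totally ordered in $P$ and hence lies in a single connected component, namely the one containing $L^{-1}(1)$; I will call this the \emph{active} component at the given step. For any non-active component, $\partial$ decrements every label by $1$, which is a uniform shift and leaves the standardization unchanged. For the active component, a direct comparison of the definition of $\partial$ on $P$ with the definition of $\partial_{P_j}$ on $P_j$ (or equivalently, via the toggle description of Proposition~\ref{Prop1}) shows that $\st(\partial(L)\vert_{P_j})=\partial_{P_j}(\st(L\vert_{P_j}))$. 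Lemma~\ref{Lem4} then identifies the active component at step $\gamma$ as the component of $L^{-1}(\gamma+1)$, and induction on $\gamma$ delivers the displayed identity.

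With the identity in hand, I would prove the biconditional. For the forward direction, suppose $L$ is tangled. By Remark~\ref{Rem1}, $L_{n-2}$ has exactly $n-2$ frozen elements (those with labels $\{3,\ldots,n\}$); they form an upper order ideal and are internally sorted, so the only possible linear-extension violation in $L_{n-2}$ is $L_{n-2}^{-1}(2)<_P L_{n-2}^{-1}(1)$. These two non-frozen elements therefore lie in a common component $P_i$, and iterating Lemma~\ref{Lem4} gives $L^{-1}(n-1),L^{-1}(n)\in P_i$. Consequently $a_i(n-2)=n_i-2$, and since the violation descends to the standardization, $\partial_{P_i}^{n_i-2}(\st(L\vert_{P_i}))=\st(L_{n-2}\vert_{P_i})$ is not a linear extension of $P_i$, showing that $\st(L\vert_{P_i})$ is tangled. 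For the reverse direction, if $L^{-1}(n-1),L^{-1}(n)\in P_i$ and $\st(L\vert_{P_i})$ is tangled, then $a_i(n-2)=n_i-2$ and $a_j(n-2)=n_j$ for $j\neq i$. Proposition~\ref{Prop3}, together with the fact that classical promotion preserves linear extensions, then gives that $\st(L_{n-2}\vert_{P_j})$ is a linear extension of $P_j$ for $j\neq i$, while $\st(L_{n-2}\vert_{P_i})$ is not; hence $L_{n-2}$ is not a linear extension and $L$ is tangled.

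The enumeration is then a direct count. The component $P_i$ in the biconditional is uniquely determined by a tangled $L$ (it is the unique component containing both $L^{-1}(n-1)$ and $L^{-1}(n)$), so summing over $i$ avoids double-counting. For fixed $i$, I would first choose the $n_i-2$ labels of $P_i$ in $[n-2]$ in $\binom{n-2}{n_i-2}$ ways (the two remaining labels of $P_i$ must be $n-1$ and $n$); next, choose a tangled labeling of $P_i$ in $t_i$ ways, which together with the chosen label set determines $L\vert_{P_i}$ because standardization is order-preserving; finally, assign the remaining $n-n_i$ labels to $P\setminus P_i$ in any of $(n-n_i)!$ ways. The product simplifies to $(n-2)!\,t_i/(n_i-2)!$, and summing over $i$ produces the claimed formula.
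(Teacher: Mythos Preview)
Your proposal is correct and follows essentially the same approach as the paper: both arguments rest on the observation that the promotion chain lies in a single (``active'') component, that $\partial$ acts as a uniform shift on non-active components, and that on the active component $\partial$ commutes with standardization, so that after $n-2$ promotions the restriction to $P_i$ has been promoted exactly $n_i-2$ times. Your explicit formulation of the identity $\st(L_\gamma\vert_{P_j})=\partial_{P_j}^{a_j(\gamma)}(\st(L\vert_{P_j}))$ is a slightly cleaner packaging of what the paper establishes inline via the index function $\alpha(\gamma)$, and the enumeration step is identical.
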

\begin{proof}
First, assume that there exists $i\in[r]$ such that $\st(L\vert_{P_i})$ is tangled and $L^{-1}(n-1),L^{-1}(n)\in P_i$.  For each $0\leq \gamma\leq n-1$, let $P_{\alpha(\gamma)}$ be the connected component of $P$ that contains the element $L_{\gamma}^{-1}(1)$.  By Lemma~\ref{Lem4}, $P_{\alpha(\gamma)}$ is also the connected component of $P$ containing $L^{-1}(\gamma+1)$. In particular, as $\gamma$ ranges from $0$ to $n-3$ (corresponding to the first $n-2$ promotions), the index $\alpha(\gamma)$ assumes the value $i$ exactly $n_i-2$ times. Notice that the promotion chain of $L_\gamma$ is contained in $P_{\alpha(\gamma)}$.  Consequently, when we apply $\partial$ to $L_{\gamma}$, the relative order of the labels of the elements of $P\setminus P_{\alpha(\gamma)}$ does not change. On the other hand, $\st(L_{\gamma+1}\vert_{P_{\alpha(\gamma)}})=\partial(\st(L_{\gamma}\vert_{P_{\alpha(\gamma)}}))$. We find that $\st(L_{n-2}\vert_{P_{i}})=\partial^{n_i-2}(\st(L\vert_{P_{i}}))$, and this labeling is not a linear extension by the assumption that $\st(L\vert_{P_i})$ is tangled.  This means that $L_{n-2}$ is also not a linear extension, so $L$ is tangled.

For the converse, suppose $L$ is tangled. This means that the labeling $L_{n-2}$ is not a linear extension, so there exist $x,y\in P$ such that $x<_Py$ and $L_{n-2}(x)>L_{n-2}(y)$. Lemma~\ref{lem:ideals} tells us that the elements $L_{n-2}^{-1}(j)$ with $3\leq j\leq n$ are frozen, so we must have $L_{n-2}(x)=2$ and $L_{n-2}(y)=1$. In particular, $L_{n-2}^{-1}(1)$ and $L_{n-2}^{-1}(2)$ belong to the same connected component of $P$, say, $P_i$. Furthermore, $\st(L_{n-2}\vert_{P_i})$ is not a linear extension of $P_i$. Lemma~\ref{Lem4} tells us that $L^{-1}(n-1)$ is in the same connected component as $L_{n-2}^{-1}(1)$ and that $L^{-1}(n)$ is in the same connected component as $L_{n-2}^{-1}(2)$. This implies that $L^{-1}(n-1)$ and $L^{-1}(n)$ both belong to $P_i$. With $\alpha(\gamma)$ as in the previous paragraph, we see that $i$ appears exactly $n_i-2$ times in the list $\alpha(0),\alpha(1),\ldots,\alpha(n-3)$. This means that $\st(L_{n-2}\vert_{P_i})=\partial^{n_i-2}(L\vert_{P_i})$, so $\partial^{n_i-2}(L\vert_{P_i})$ is not a linear extension of $P_i$. Hence, $L\vert_{P_i}$ is tangled.    

We now prove the second part of the proposition, which enumerates the tangled labelings of $P$ in terms of the enumeration of the tangled labelings of the connected components of $P$. To construct a tangled labeling $L$ of $P$, we must first choose a connected component $P_i$ that will contain $L^{-1}(n-1)$ and $L^{-1}(n)$. We must then choose the $n_i-2$ elements of $L(P_i)\setminus\{n-1,n\}$; the number of ways to make this choice is $\binom{n-2}{n_i-2}$. We then have to assign these labels to the elements of $P_i$ so that $\st(L\vert_{P_i})$ is a tangled labeling of $P_i$; the number of ways to do this is $t_i$. We then simply assign the remaining $n-n_i$ labels to the elements of $P\setminus P_i$ arbitrarily; the number of ways to do this is $(n-n_i)!$.  Consequently, the total number of tangled labelings of $P$ is
\[\sum_{i=1}^r t_i\binom{n-2}{n_i-2}(n-n_i)!=(n-2)!\sum_{i=1}^r \frac{t_i}{(n_i-2)!}. \qedhere\]
\end{proof}

Theorem~\ref{thm:connectedreduction} tells us that in order to enumerate tangled labelings of inflated rooted forest posets, it suffices to consider inflated rooted tree posets. Before doing so, we require some more terminology.  Let $Q$ be a rooted tree poset with minimal elements (leaves) $\ell_1, \ldots, \ell_s$.  For each $\ell_i$, there is a unique path in the Hasse diagram of $Q$ from $\ell_i$ to the maximal element (root).  Call the elements of this path $u_{i,0}, u_{i,1}, \ldots, u_{i,\omega(i)}$, where $u_{i,0}=\ell_i$ and for all $1\leq j\leq\omega(i)$, $u_{i,j}$ covers $u_{i,j-1}$ (so that $u_{i,\omega(i)}$ is the root of $Q$).  Furthermore, suppose $P$ is an inflation of $Q$ with map $\varphi$.  Then we define \[b_{i,j}=\sum_{v\leq_Q u_{i,j-1}}|\varphi^{-1}(v)|\quad\text{and}\quad c_{i,j}=\sum_{v<_Q u_{i,j}}|\varphi^{-1}(v)|.\]
The quotient $\frac{b_{i,j}}{c_{i,j}}$ represents, among the elements of $P$ that are smaller than the minimal element of $\varphi^{-1}(u_{i,j})$, the fraction of those that lie ``in the direction'' of $\varphi^{-1}(\ell_i)$.

Recall that, by convention, a $1$-element poset has no tangled labelings. In what follows, it will be convenient to restrict our attention to inflated rooted tree posets $P$ with $|P|\geq 2$. 

\begin{theorem}\label{thm:inflated-tree}
Let $P$ be an inflation of the (reduced) rooted tree poset $Q$ with map $\varphi:P\to Q$, and assume $|P|\geq 2$. Let $\ell_1, \ldots, \ell_s$ be the leaves of $Q$, and let the $u_{i,j}$'s, $b_{i,j}$'s, and $c_{i,j}$'s be as defined above.  Then the number of tangled labelings of $P$ is
$$(n-1)!\sum_{i=1}^s \prod_{j=1}^{\omega(i)} \frac{b_{i,j}-1}{c_{i,j}-1},$$
where $n=\displaystyle\sum_{v \in Q}|\varphi^{-1}(v)|$ is the total size of $P$.
\end{theorem}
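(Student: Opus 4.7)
The plan is to prove the theorem in two stages: a \emph{localization} step, showing every tangled labeling $L$ of $P$ must have $L^{-1}(n)=\hat\ell_i:=\min\varphi^{-1}(\ell_i)$ for some leaf $\ell_i$ of $Q$; and an \emph{enumeration} step, counting tangled labelings per leaf by an induction on $\omega(i)$ that produces the product structure.

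For the localization, I would use Remark~\ref{Rem1}: in a tangled labeling, $F_0=\emptyset$ and $|F_\gamma|=\gamma$ for $0\leq\gamma\leq n-2$, so exactly one new element $z_{\gamma+1}$ (maximal in $P\setminus F_\gamma$) becomes frozen at each step. The two unfrozen elements at step $n-2$ form a covering pair $a<_P b$ with $L_{n-2}(a)=2$, and the upper-ideal property of $F_{n-2}$ forces $a$ to be an honest minimum of $P$; by the inflation property, the minima of $P$ are exactly $\{\hat\ell_1,\dots,\hat\ell_s\}$. The key dynamical observation is that when $L^{-1}(n)$ is a minimum of $P$ it never lies in a promotion chain for $\gamma\leq n-2$, so its label decrements each step and $L_\gamma(L^{-1}(n))=n-\gamma$; taking $\gamma=n-2$ gives $L^{-1}(n)=L_{n-2}^{-1}(2)=a=\hat\ell_i$. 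A companion argument, tracking the label of $L^{-1}(n)$ through the iterations when $L^{-1}(n)$ is not a minimum, shows that it must eventually enter a promotion chain, forcing either early freezing at a high label or premature return to the bottom, either of which breaks the tangled pattern $|F_\gamma|=\gamma$.

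For the enumeration, I would fix a leaf $\ell_i$ and induct on $\omega(i)$. The base case $\omega(i)=0$ corresponds to $|Q|=1$: then $P$ has the unique minimum $\hat\ell_i$, and the identity $L_\gamma(\hat\ell_i)=n-\gamma$ (valid for $\gamma\leq n-2$) shows that every one of the $(n-1)!$ labelings with $L(\hat\ell_i)=n$ is tangled, matching the empty product. For the inductive step, I would isolate a condition $C_{i,1}$ at the deepest level of the root-to-leaf path: $C_{i,1}$ should encode that during the early iterations of $\partial$, the promotion chains progress in a manner respecting the leaf block $\varphi^{-1}(\ell_i)$, so that none of the sibling blocks $\varphi^{-1}(\ell')$ (for $\ell'$ a sibling leaf of $\ell_i$ under $u_{i,1}$) becomes fully frozen too early. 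Conditional on $L^{-1}(n)=\hat\ell_i$, I would show that $C_{i,1}$ holds with probability $\frac{b_{i,1}-1}{c_{i,1}-1}$, reflecting that a single ``critical'' label must land among the $b_{i,1}-1$ non-$\hat\ell_i$ elements of $\varphi^{-1}(\ell_i)$ rather than among the $c_{i,1}-b_{i,1}$ elements below $u_{i,1}$ outside $\varphi^{-1}(\ell_i)$. Once $C_{i,1}$ is enforced, the residual counting problem should reduce to counting tangled labelings of the inflated rooted tree poset associated with $Q$ with the siblings of $\ell_i$ absorbed into $u_{i,1}$ (shortening the leaf-to-root path by one level), and the inductive hypothesis contributes $\prod_{j=2}^{\omega(i)}\frac{b_{i,j}-1}{c_{i,j}-1}$. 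Summing $(n-1)!\prod_j\frac{b_{i,j}-1}{c_{i,j}-1}$ over all leaves yields the theorem.

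The main obstacle is the inductive step: pinning down the precise condition $C_{i,1}$, verifying its conditional probability, and establishing the reduction to a smaller inflated rooted tree poset. The dynamics of $\partial$ on non-linear extensions are subtle, as promotion chains can extend from the unfrozen region into the frozen upper order ideal and jump across sibling tree branches. An auxiliary lemma is needed describing how $\partial$ acts on labelings with $L^{-1}(n)$ pinned at the bottom of a leaf block, showing in particular how the ``critical'' label whose position controls $C_{i,1}$ propagates through the iterations. Once this structural reduction is in place, the chain of conditional probabilities multiplies to give the product formula, and summation over leaves completes the proof.
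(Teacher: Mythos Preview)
Your localization step is essentially correct and matches the paper's Lemma~\ref{lem:n-minimal}: in a tangled labeling the element with label $n$ stays put as $L_\gamma^{-1}(n-\gamma)$ for all $\gamma\le n-2$, hence equals $L_{n-1}^{-1}(1)$, which is minimal; and the minimal elements of an inflated rooted tree poset are exactly the $\hat\ell_i$.

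The enumeration step, however, diverges from the paper and has a real gap. You propose to induct from the leaf upward, peeling off the factor $\frac{b_{i,1}-1}{c_{i,1}-1}$ first via a condition $C_{i,1}$ determined by where a ``single critical label'' lands, and then reducing to a smaller inflated rooted tree. Two problems arise. First, the factor $\frac{b_{i,j}-1}{c_{i,j}-1}$ is \emph{not} governed by the initial position of one label; the paper's Lemma~\ref{lem:fork-disribution} shows that the relevant event (``$\widetilde L_{n-2}^{-1}(1)$ lands in $A$ rather than $B$'') is decided by a cascading sequence of indices $\gamma_1>\gamma_2>\cdots$, and only at the terminal step does a uniform choice over $X$ occur. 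Your single-critical-label picture would give the right number only by coincidence, and you acknowledge you cannot yet pin down $C_{i,1}$. Second, the proposed reduction ``absorb the siblings of $\ell_i$ into $u_{i,1}$ and recurse'' is not sound: the dynamics of $\partial$ on $P$ do not project to the dynamics of $\partial$ on such a quotient poset, because promotion chains in $P$ can weave through the sibling blocks in ways that have no analogue after collapsing them.

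The paper avoids both difficulties by working in the opposite direction. It deletes $x_0=\hat\ell_i$, passes to $\widetilde P$, and observes that tangledness is equivalent to $\widetilde L_{n-2}^{-1}(1)\in\widetilde\varphi^{-1}(\ell_i)$. It then conditions from the \emph{root} down: at level $j=\omega(i)$ the hypothesis ``$\widetilde L_{n-2}^{-1}(1)\in X_{\omega(i)}$'' is automatic (every minimal element of $\widetilde P$ lies below the root block), so Lemma~\ref{lem:fork-disribution} applies and yields conditional probability $\frac{b_{i,\omega(i)}-1}{c_{i,\omega(i)}-1}$. Crucially, Lemma~\ref{prop:partition} guarantees that this event depends only on $\widetilde L$ outside $A_{\omega(i)}$, so one may iterate down to $j=1$ with genuine conditional independence and no poset surgery. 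This root-to-leaf order, together with the fork-distribution lemma, is the missing idea in your outline.
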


(Strictly speaking, this formula also holds for $Q$ not reduced if we use the convention $0/0=1$; the proof in this more general setting requires a few extra case distinctions.  However, since every inflated rooted tree poset is the inflation of a reduced rooted tree poset, we deem it cleaner to avoid this issue entirely.)

A \emph{rooted star poset} is a rooted tree poset in which the root covers every leaf. The Hasse diagram of a rooted tree poset is graph-theoretically isomorphic to a star graph, where the center of the star is the root of the tree. The preceding theorem gives a particularly clean formula for the number of tangled labelings of an inflation of a rooted star poset.

\begin{corollary}\label{corollary:corolla}
Let $Q$ be a rooted star poset with root $v^*$, and let $s\geq 1$ be the number of children of $v^*$. Let $P$ be an inflation of $Q$ with map $\varphi$. The number of tangled labelings of $P$ is
$$(n-1)! \left(\frac{\mu-s}{\mu-1}\right),$$
where $n=|P|$ and $\mu=n-|\varphi^{-1}(v^*)|$.
\end{corollary}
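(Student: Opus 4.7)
The plan is to obtain this as a direct specialization of Theorem~\ref{thm:inflated-tree}. Since $Q$ is a rooted star with root $v^*$ and leaves $\ell_1,\ldots,\ell_s$, the Hasse path from each leaf $\ell_i$ to the root has length $1$: we have $u_{i,0}=\ell_i$, $u_{i,1}=v^*$, and $\omega(i)=1$. Consequently, the product in the theorem's formula collapses to a single factor $\frac{b_{i,1}-1}{c_{i,1}-1}$ for each $i$.

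Next, I would evaluate $b_{i,1}$ and $c_{i,1}$ using their definitions. Because $\ell_i$ is minimal in $Q$, the only element $v \leq_Q u_{i,0}=\ell_i$ is $\ell_i$ itself, so $b_{i,1}=|\varphi^{-1}(\ell_i)|$. The elements strictly below $u_{i,1}=v^*$ in a star are precisely the leaves $\ell_1,\ldots,\ell_s$, so
\[
c_{i,1}=\sum_{k=1}^{s}|\varphi^{-1}(\ell_k)|=n-|\varphi^{-1}(v^*)|=\mu,
\]
which is independent of $i$.

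Substituting these into Theorem~\ref{thm:inflated-tree} and using that $\sum_{i=1}^{s}|\varphi^{-1}(\ell_i)|=\mu$ (and that the sum has $s$ terms), I would obtain
\[
(n-1)!\sum_{i=1}^{s}\frac{|\varphi^{-1}(\ell_i)|-1}{\mu-1}=\frac{(n-1)!}{\mu-1}\sum_{i=1}^{s}\bigl(|\varphi^{-1}(\ell_i)|-1\bigr)=(n-1)!\left(\frac{\mu-s}{\mu-1}\right),
\]
which is the claimed formula. There is essentially no obstacle, as the argument is a direct calculation; the only subtle point is the degenerate case $s=1$, in which $Q$ is not reduced and $\mu-1$ may vanish, but this is covered by the $0/0=1$ convention noted in the parenthetical following Theorem~\ref{thm:inflated-tree}.
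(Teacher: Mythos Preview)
Your proof is correct and follows essentially the same approach as the paper's own proof: both specialize Theorem~\ref{thm:inflated-tree} by observing that $\omega(i)=1$, $b_{i,1}=|\varphi^{-1}(\ell_i)|$, and $c_{i,1}=\mu$ for every leaf, then sum the resulting single-factor products. Your remark about the degenerate $s=1$ case (where $Q$ is not reduced) is a nice addition that the paper's proof leaves implicit.
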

\begin{proof}
Write $\ell_1, \ldots, \ell_s$ for the leaves of $Q$.  For each $1\leq i\leq s$ we have $\omega(i)=1$, $u_{i,0}=\ell_i$, $u_{i,1}=v^*$, $b_{i,1}=|\varphi^{-1}(\ell_i)|$, and $c_{i,1}=\sum_{k=1}^s |\varphi^{-1}(\ell_k)|=\mu$.  Then the formula in Theorem~\ref{thm:inflated-tree} reduces to
$$(n-1)! \sum_{i=1}^s \frac{|\varphi^{-1}(\ell_i)|-1}{\mu-1}=(n-1)! \left(\frac{\mu-s}{\mu-1}\right),$$
as needed.
\end{proof}

We also obtain a simple formula for the number of tangled labelings of a poset in which every connected component has a unique minimal element. 
\begin{corollary}\label{Cor1}
Let $P$ be an $n$-element poset with $r$ connected components, each of which has a unique minimal element. The number of tangled labelings of $P$ is \[(n-r)(n-2)!.\]
\end{corollary}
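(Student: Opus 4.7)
The plan is to combine Theorem~\ref{thm:connectedreduction}, which reduces the enumeration to the connected case, with Theorem~\ref{thm:inflated-tree}, applied to each connected component via the simplest possible tree representation.

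First, I would observe that any connected poset $P_i$ with a unique minimal element is trivially an inflation of the one-element rooted tree poset $Q = \{v^*\}$ via the constant map $\varphi\colon P_i \to Q$: condition~(1) of Definition~\ref{Def1} becomes exactly the assumption that $P_i$ has a unique minimal element, while condition~(2) is vacuous. Since the one-element tree has no vertex with exactly one child, it is already reduced, so Theorem~\ref{thm:inflated-tree} applies directly (provided $|P_i|\geq 2$).

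Next, I would specialize Theorem~\ref{thm:inflated-tree} to this trivial inflation. The single vertex of $Q$ is simultaneously the root and the only leaf, so $s=1$ and $\omega(1)=0$, which makes the inner product $\prod_{j=1}^{\omega(1)}$ empty and hence equal to $1$. The theorem then yields $t_i = (n_i - 1)!$ tangled labelings of $P_i$ when $n_i \geq 2$, while $t_i = 0$ when $n_i = 1$ by the convention adopted immediately after the definition of tangled.

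Finally, I would plug these values into the formula from Theorem~\ref{thm:connectedreduction}. Using $(n_i - 1)!/(n_i - 2)! = n_i - 1$ for $n_i \geq 2$, and noting that singleton components contribute $0 = n_i - 1$ either way, the computation collapses to
\[
(n-2)! \sum_{i=1}^r \frac{t_i}{(n_i-2)!} \;=\; (n-2)! \sum_{i=1}^r (n_i - 1) \;=\; (n-r)(n-2)!.
\]
The only substantive point requiring care (rather than a genuine obstacle) is verifying that Theorem~\ref{thm:inflated-tree} degenerates cleanly when $Q$ is the one-vertex tree, so that the empty product yields the tidy count $(n_i - 1)!$; once this is in hand, the rest is formal manipulation.
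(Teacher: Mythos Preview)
Your proposal is correct and follows essentially the same route as the paper: realize each connected component as an inflation of the one-vertex rooted tree, read off $t_i=(n_i-1)!$ from the empty product in Theorem~\ref{thm:inflated-tree}, and then feed these values into Theorem~\ref{thm:connectedreduction}. The paper organizes the casework slightly differently (treating $r=1$ first and introducing the convention $(-1)!=1$ for singletons), but the substance is identical.
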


\begin{proof}
We first consider the case in which $r=1$ and $n\geq 2$. This means that $P$ has a unique minimal element, so it is an inflation of a $1$-element poset $Q$. In the notation of Theorem~\ref{thm:inflated-tree}, we have $s=1$ and $\omega(1)=0$. Then the product appearing in the formula in that theorem is the empty product $1$, so the number of tangled labelings of $P$ is $(n-1)!=(n-1)(n-2)!$. If $r=1$ and $n=1$, then $P$ has no tangled labelings by convention. In this case, the number of tangled labelings is still $(n-1)(n-2)!$ if we make the (unusual) convention $(-1)!=1$. 

Now suppose $r\geq 2$. Let $P_1,\ldots P_r$ be the connected components of $P$. Let $n_i=|P_i|$, and let $t_i$ be the number of tangled labelings of $P_i$. The preceding paragraph tells us that $t_i=(n_i-1)(n_i-2)!$. Theorem~\ref{thm:connectedreduction} now implies that the number of tangled labelings of $P$ is  \[(n-2)!\sum_{i=1}^r\frac{t_i}{(n_i-2)!}=(n-2)!\sum_{i=1}^r(n_i-1)=(n-r)(n-2)!. \qedhere\]
\end{proof}

The proof of Theorem~\ref{thm:inflated-tree} begins with the following lemma. 

\begin{lemma}\label{lem:n-minimal}
If $L$ is a tangled labeling of an $n$-element poset $P$, then $L^{-1}(n)$ is a minimal element of $P$.
\end{lemma}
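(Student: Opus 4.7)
The plan is to track the location of $u := L^{-1}(n)$ through the iterates of $\partial$ and to show that it never moves. Define $e_\gamma := L_\gamma^{-1}(n - \gamma)$ for $0 \leq \gamma \leq n - 1$, so $e_0 = u$ and $L_\gamma(e_\gamma) = n - \gamma$. Since $L_{n-1}$ is a linear extension by Proposition~\ref{Prop3}, the element $e_{n-1} = L_{n-1}^{-1}(1)$ is minimal in $P$. Hence it suffices to prove $e_{\gamma+1} = e_\gamma$ for every $\gamma \in \{0, 1, \ldots, n-2\}$; combined with $e_0 = u$, this will give $u = e_{n-1}$, a minimal element of $P$.

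By the definition of $\partial$, whenever $e_\gamma$ is not in the promotion chain of $L_\gamma$, its label simply decreases by $1$, so $L_{\gamma+1}(e_\gamma) = n - \gamma - 1$ and $e_{\gamma+1} = e_\gamma$. The task therefore reduces to showing that $e_\gamma$ is not in the promotion chain of $L_\gamma$ for each $\gamma \leq n - 2$. I will prove this by contradiction, leveraging Remark~\ref{Rem1}: since $L$ is tangled, $|F_\gamma| = \gamma$, so the frozen set of $L_\gamma$ consists exactly of the elements carrying the top $\gamma$ labels of $L_\gamma$ and in particular does not contain $e_\gamma$.

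Suppose for contradiction that $e_\gamma = v_j$ in the promotion chain $v_1, \ldots, v_m$ of $L_\gamma$. The case $j = 1$ is impossible, because it would force $L_\gamma(e_\gamma) = 1$ and hence $\gamma = n - 1$. If $e_\gamma$ is maximal in $P$ (which includes the case $j = m$), then $\{e_\gamma\}$ and $F_\gamma$ are both upper order ideals, so $\{e_\gamma\} \cup F_\gamma$ is an upper order ideal of size $\gamma + 1$ whose elements carry the top $\gamma + 1$ labels of $L_\gamma$, forcing $|F_\gamma| \geq \gamma + 1$, a contradiction. In the remaining case $1 < j < m$, the fact that $v_j = e_\gamma$ is the $L_\gamma$-successor of $v_{j-1}$ means every element strictly above $v_{j-1}$ has $L_\gamma$-label at least $n - \gamma$, and so lies in $\{e_\gamma\} \cup F_\gamma$; applied to any element strictly above $e_\gamma$ (which is automatically strictly above $v_{j-1}$), this shows that all strict uppers of $e_\gamma$ lie in $F_\gamma$, making $\{e_\gamma\} \cup F_\gamma$ an upper order ideal and yielding the same contradiction. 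I expect this middle-of-chain case to be the main obstacle, since it requires careful attention to the label ranges forced by the definition of the $L_\gamma$-successor.
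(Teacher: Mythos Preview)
Your proof is correct and follows essentially the same approach as the paper's: both track $e_\gamma := L_\gamma^{-1}(n-\gamma)$, use Remark~\ref{Rem1} to pin $|F_\gamma|=\gamma$, and argue that $e_\gamma$ never lies in the promotion chain, so that $e_\gamma$ is constant and equals the minimal element $L_{n-1}^{-1}(1)$. The only difference is presentational: the paper argues directly that, because $|F_\gamma|=\gamma$, the set $F_\gamma\cup\{e_\gamma\}$ fails to be an upper order ideal, producing a $y>_P e_\gamma$ with $L_\gamma(y)<n-\gamma$, which immediately excludes $e_\gamma$ from the promotion chain; you run the contrapositive via a case split on the position of $e_\gamma$ in the chain, reaching the contradiction $|F_\gamma|\geq\gamma+1$.
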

\begin{proof}
Let $F_\gamma$ be the set of elements of $P$ that are frozen with respect to $L_\gamma$. Because $L$ is tangled, Remark~\ref{Rem1} tells us that $|F_\gamma|=\gamma$ for all $0\leq\gamma\leq n-2$. Consequently, $F_\gamma=\{x\in P:n-\gamma+1\leq L_\gamma(x)\leq n\}$. This means that $L_\gamma^{-1}(n-\gamma)$ is not frozen with respect to $L_\gamma$, so the set $F_\gamma\cup\{L_\gamma^{-1}(n-\gamma)\}$ is not an upper order ideal of $P$. The set $F_\gamma$ is an upper order ideal of $P$, so there must be some $y\in P$ such that $L_\gamma^{-1}(n-\gamma)<_P y$ and $L(y)<n-\gamma$. This implies that $L_\gamma^{-1}(n-\gamma)$ is not in the promotion chain of $L_\gamma$, so $L_\gamma^{-1}(n-\gamma)=L_{\gamma+1}^{-1}(n-\gamma-1)$. As this is true for all $0\leq\gamma\leq n-2$, we must have $L^{-1}(n)=L_1^{-1}(n-1)=L_2^{-1}(n-2)=\cdots=L_{n-1}^{-1}(1)$. However, we know that $L_{n-1}$ is a linear extension of $P$, so $L_{n-1}^{-1}(1)$ is a minimal element of $P$. 
\end{proof}

Let us sketch the argument used in the proof of Theorem~\ref{thm:inflated-tree} before presenting it formally.  We imagine $L(=L_0),L_1,L_2,\ldots,L_{n-2}$ as a sequence of labelings; in each $L_{\gamma}$, we pay particular attention to the ``special'' element with the label $n-1-\gamma$.  Lemma~\ref{lem:n-minimal} tells us that in a tangled labeling, the element with the label $n$ must be minimal; for each such minimal element, we ask how many labelings have the property that the special element always stays above this minimal element as we iteratively apply $\partial$.  In the case of an inflated rooted tree poset, all that matters is which subtree the special element ``slides down into'' at each ``branch vertex,'' and it turns out that (under certain technical conditions) this distribution is proportional to the distribution of the sizes of the subtrees.  We begin the formal discussion by developing a few preparatory tools.

\begin{lemma}\label{Lem3}
Let $P$ be an $N$-element poset, and let $X=\{y\in P:y<_P x\}$ for some $x\in P$. Suppose that every element of $P$ that is comparable with some element of $X$ is also comparable with $x$. If $L$ and $\widetilde L$ are labelings of $P$ that agree on $P\setminus X$, then for every $\gamma\geq 1$, the labelings $L_\gamma$ and $\widetilde L_\gamma$ also agree on $P\setminus X$.
\end{lemma}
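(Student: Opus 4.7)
The plan is to establish the case $\gamma = 1$ by a careful analysis of the promotion chain of $L$ and then induct; the induction step is essentially free because the hypothesis on $X$ is purely structural (it depends only on $P$, not on any labeling), so the base case applies equally well to $(L_\gamma, \widetilde L_\gamma)$ once we know they agree on $P \setminus X$.

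The argument rests on three observations about $X$ and its upper cone $Y := \{z \in P : z \geq_P x\}$. First, every element of $Y$ lies strictly above every element of $X$, since $y <_P x \leq_P z$ for $y \in X$ and $z \in Y$. Second, no element of $X$ lies strictly above any $v \in P \setminus X$: if $v <_P y$ for some $y \in X$, then $v <_P y <_P x$ forces $v \in X$. Third (and this is where the hypothesis is used), if $v \in P \setminus X$ is comparable with some element of $X$, then $v \in Y$: the hypothesis gives $v$ comparable with $x$, and $v <_P x$ would put $v \in X$, so $v \geq_P x$.

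Now let $v_1 <_P \cdots <_P v_m$ be the promotion chain of $L$. By the second observation, once the chain leaves $X$ it cannot return; and it must eventually leave $X$ because $v_m$, being maximal in $P$, cannot lie in $X$. So the chain splits as a (possibly empty) initial segment in $X$ followed by a nonempty terminal segment in $P \setminus X$. If the initial segment is empty, then $v_1 = L^{-1}(1) \in P \setminus X$; since $1 \in L(P \setminus X) = \widetilde L(P \setminus X)$, we have $v_1 = \widetilde L^{-1}(1)$. If the initial segment is nonempty, let $v_{i^*}$ be its last element and $v_{i^*+1}$ the exit point. By the third observation $v_{i^*+1} \in Y$, and by the first observation all of $Y$ already lies above $v_{i^*}$, so $v_{i^*+1}$ must be the element $w^* \in Y$ of minimum $L$-label; this $w^*$ depends only on $L|_Y = \widetilde L|_Y$. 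Either way, the terminal segments of the two chains begin at the same element, and by the second observation every subsequent $L$-successor depends only on labels in $P \setminus X$, so the terminal segments coincide as sequences.

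To finish the base case, I would check that for $z \in P \setminus X$ the value $\partial(L)(z)$ equals either $L(z) - 1$ (if $z$ is not in the chain), $L(v_{i+1}) - 1$ (if $z = v_i$ with $i < m$, in which case both $v_i$ and $v_{i+1}$ lie in $P \setminus X$), or $n$ (if $z = v_m$); each of these is determined by $L|_{P \setminus X}$, and therefore equals $\partial(\widetilde L)(z)$. The induction step then applies the same reasoning with $L_\gamma, \widetilde L_\gamma$ in place of $L, \widetilde L$. The main obstacle is the exit-point analysis: a priori the chain could leave $X$ at many different elements, but the first and third observations together force it to land at the globally $L$-minimum element of $Y$, independent of where inside $X$ the exit happens and independent of how the labels are distributed inside $X$.
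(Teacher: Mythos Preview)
Your argument is correct and follows essentially the same approach as the paper: reduce to $\gamma=1$, show that the portion of the promotion chain lying in $P\setminus X$ is determined by $L|_{P\setminus X}$ (by identifying the exit point as the element of $\{z:z\ge_P x\}$ of minimum label when $1\in L(X)$, and as $L^{-1}(1)$ otherwise), and then induct. Your three observations make explicit the structural facts that the paper's proof uses more tersely, but the underlying idea is the same.
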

\begin{proof}
It suffices to prove the case in which $\gamma=1$; the general case will then follow by induction.  Let $C$ and $\widetilde C$ be the promotion chains of $L$ and $\widetilde L$, respectively. It suffices to prove that $C\cap(P\setminus X)=\widetilde C\cap(P\setminus X)$. Since $L$ and $\widetilde L$ agree on $P\setminus X$, it suffices to show that the minimal elements of $C\cap(P\setminus X)$ and $\widetilde C\cap(P\setminus X)$ are the same. Notice that $L(X)=\widetilde L(X)$. If $1\not\in L(X)$, then the minimal element of $C\cap(P\setminus X)$ is $L^{-1}(1)$. This is also $\widetilde L^{-1}(1)$, which is the minimal element of $\widetilde C\cap(P\setminus X)$. Now suppose $1\in L(X)$. The minimal element of $C\cap(P\setminus X)$ must be the element of $\{y\in P:y\geq_P x\}$ with the smallest label. This is also the minimal element of $\widetilde C\cap(P\setminus X)$. 
\end{proof}

The following lemma is an immediate consequence of Lemma~\ref{Lem3}. 

\begin{lemma}\label{prop:partition}
Let $P$, $x$, and $X$ be as in Lemma~\ref{Lem3}. If $L$ is a labeling of $P$ and $\gamma\geq 0$, then the set $L_\gamma(X)$ depends on only the set $L(X)$ and the restriction $L\vert_{P\setminus X}$; it does not depend on the way in which the labels in $L(X)$ are distributed among the elements of $X$. 
\end{lemma}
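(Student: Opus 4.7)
The plan is to deduce the statement directly from Lemma~\ref{Lem3} by exploiting the fact that labelings are bijections onto $[n]$. Concretely, I would take two labelings $L$ and $\widetilde L$ of $P$ that have $L(X)=\widetilde L(X)$ (as sets) and $L\vert_{P\setminus X}=\widetilde L\vert_{P\setminus X}$, and show that $L_\gamma(X)=\widetilde L_\gamma(X)$ for every $\gamma\geq 0$. The crux is that Lemma~\ref{Lem3} applies verbatim under these hypotheses, because the hypothesis of that lemma requires only that $L$ and $\widetilde L$ agree on $P\setminus X$, which we have.

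The key steps are then as follows. First, invoke Lemma~\ref{Lem3} to conclude that $L_\gamma$ and $\widetilde L_\gamma$ agree on $P\setminus X$ for every $\gamma\geq 0$. Second, observe that $L_\gamma$ and $\widetilde L_\gamma$ are each bijections from $P$ to $[n]$, so
\[
L_\gamma(X)=[n]\setminus L_\gamma(P\setminus X)=[n]\setminus \widetilde L_\gamma(P\setminus X)=\widetilde L_\gamma(X),
\]
which is exactly the desired conclusion. Finally, since $L_\gamma(X)$ is determined by the pair $(L(X),L\vert_{P\setminus X})$ and does not change when the labels in $L(X)$ are redistributed among the elements of $X$, the claim follows.

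There is essentially no obstacle: once Lemma~\ref{Lem3} is in hand, the only thing to verify is that agreement on $P\setminus X$ of two bijections to $[n]$ forces the image of the complement to be identical. This is a one-line set-complement argument, which is why the lemma is stated as an immediate consequence. The only minor subtlety worth making explicit is that Lemma~\ref{Lem3} was phrased for $\gamma\geq 1$, so I would separately point out the trivial $\gamma=0$ case, where $L_0(X)=L(X)=\widetilde L(X)=\widetilde L_0(X)$ by hypothesis.
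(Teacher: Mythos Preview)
Your argument is correct and is precisely the immediate deduction from Lemma~\ref{Lem3} that the paper has in mind: agreement on $P\setminus X$ propagates under $\partial$ by Lemma~\ref{Lem3}, and then the set of labels on $X$ is recovered as the complement in $[n]$. One small remark: the assumption $L(X)=\widetilde L(X)$ is actually automatic once $L\vert_{P\setminus X}=\widetilde L\vert_{P\setminus X}$, since both maps are bijections onto $[n]$, so you need not state it separately.
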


\begin{lemma}\label{lem:fork-disribution}
Let $P$, $x$, and $X$ be as in Lemma~\ref{Lem3}, and assume $X$ is nonempty.  Further suppose that there is a partition of $X$ into disjoint subsets $A$ and $B$ such that no element of $A$ is comparable with any element of $B$.  Fix an injective map $M:P \setminus X\to[N]$ such that every labeling $L:P\to[N]$ satisfying $L\vert_{P\setminus X}=M$ has the property that $L_{N-1}^{-1}(1) \in X$.  If a labeling $L:P\to[N]$ satisfying $L\vert_{P\setminus X}=M$ is chosen uniformly at random, then the probability that $L_{N-1}^{-1}(1) \in A$ is $|A|/|X|$.
\end{lemma}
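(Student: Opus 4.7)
The plan is to reduce the lemma to the claim that, for every labeling $L$ extending $M$ and satisfying the hypothesis, $L_{N-1}^{-1}(1) \in A$ if and only if $L^{-1}(m_0) \in A$, where $m_0 := \min L(X)$ is the smallest label that $L$ assigns to $X$. Granting this claim, the lemma follows easily: a uniformly random $L$ extending $M$ restricts to a uniformly random bijection $L|_X \colon X \to [N] \setminus M(P \setminus X)$, so the label $m_0$ lies on a uniformly random element of $X$, giving $\Pr[L^{-1}(m_0) \in A] = |A|/|X|$, which the claim then transfers to $L_{N-1}^{-1}(1)$.

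The first $m_0 - 1$ iterations of $\partial$ are easy to analyze: at each time $\gamma \leq m_0 - 2$, the smallest label in $L_\gamma(X)$ is $m_0 - \gamma > 1$, so $L_\gamma^{-1}(1) \notin X$, and the structural condition on $X$ forces the promotion chain to be disjoint from $X$, so every label on $X$ decrements by $1$. Consequently, at time $\gamma_0 := m_0 - 1$, we have $L_{\gamma_0}^{-1}(1) = L^{-1}(m_0)$, and it remains to show that from time $\gamma_0$ onward, the side (in $\{A, B\}$) of the position $L_\gamma^{-1}(1)$ at the final time $\gamma = N-1$ matches its side at time $\gamma_0$.

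For this, the structural hypothesis forces every promotion chain of $L_\gamma$ with $\gamma \geq \gamma_0$ to be of one of three types: disjoint from $X$; starting in $B$ and exiting through $\{z : z \geq_P x\}$; or starting in $A$ and exiting through $\{z : z \geq_P x\}$. In particular, label $1$ can never cross directly between $A$ and $B$, so the main obstacle is showing that, despite possibly long excursions through $P \setminus X$, label $1$ has the same side at time $\gamma_0$ and at time $N-1$. My plan is to use the deterministic dynamics of $L_\gamma|_{P \setminus X}$ (which by Lemma~\ref{Lem3} depend only on $M$) together with the decoupling property, provable inductively via the three-case analysis, that $L_\gamma|_A$ depends only on $L_0|_A$ and $M$. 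A bijective matching between exits of label $1$ from a given side and its subsequent re-entries to that same side should yield the desired side equality, though carrying out this matching precisely across the $N-1$ iterations is the most delicate part of the argument.
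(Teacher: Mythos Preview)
Your central claim---that $L_{N-1}^{-1}(1)\in A$ if and only if $L^{-1}(m_0)\in A$ where $m_0=\min L(X)$---is false. Take $P=\{a_1,a_2,b_1,x\}$ with relations $a_1<_Pa_2<_Px$ and $b_1<_Px$ (and $b_1$ incomparable with $a_1,a_2$), so $N=4$, $X=\{a_1,a_2,b_1\}$, $A=\{a_1,a_2\}$, $B=\{b_1\}$. Let $M(x)=1$; the hypothesis of the lemma is satisfied since every minimal element of $P$ lies in $X$. For the labeling $L$ with $L(b_1)=2$, $L(a_1)=3$, $L(a_2)=4$, you have $m_0=2$ and $L^{-1}(m_0)=b_1\in B$, but a direct computation gives $L_1=(b_1{:}1,\,a_1{:}2,\,a_2{:}3,\,x{:}4)$, $L_2=(a_1{:}1,\,a_2{:}2,\,b_1{:}3,\,x{:}4)$, $L_3=(a_1{:}1,\,b_1{:}2,\,a_2{:}3,\,x{:}4)$, so $L_{3}^{-1}(1)=a_1\in A$.

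The conceptual error is the assertion that ``label $1$ can never cross directly between $A$ and $B$.'' The position $L_{\gamma+1}^{-1}(1)$ is \emph{not} obtained from $L_\gamma^{-1}(1)$ by moving along the promotion chain; rather, $L_{\gamma+1}^{-1}(1)=L_\gamma^{-1}(2)$ whenever $L_\gamma^{-1}(2)\not>_P L_\gamma^{-1}(1)$, and $L_\gamma^{-1}(2)$ can lie in either side of $X$ regardless of where $L_\gamma^{-1}(1)$ sits. In the example above, $L_1^{-1}(1)=b_1\in B$ while $L_1^{-1}(2)=a_1\in A$, so after one promotion the label~$1$ jumps from $B$ to $A$ with no intermediate excursion outside $X$; your proposed ``bijective matching between exits and re-entries'' is therefore beside the point.

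The paper instead tracks the element $L_\gamma^{-1}(N-\gamma)$, which genuinely satisfies the monotonicity $L_{\gamma+1}^{-1}(N-\gamma-1)\leq_P L_\gamma^{-1}(N-\gamma)$; since $A$ and $B$ are lower order ideals, once this descending element lands in one side it stays there. When this element first enters $X$ (at some time $\gamma_1+1$ determined only by $M$), its side is that of $L_{\gamma_1}^{-1}(1)$, and the argument recurses on the label $\gamma_1+1$ at time $0$, producing a strictly decreasing sequence of indices $\gamma_1>\gamma_2>\cdots$ (all depending only on $M$) that must terminate. The label that ultimately decides the side is whichever of $N,\,\gamma_1+1,\,\gamma_2+1,\ldots$ first lies in $L(X)$---in the counterexample this is $N=4$, not $m_0=2$.
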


\begin{proof}
For each $0\leq \gamma\leq N-2$, we have $L_\gamma^{-1}(N-\gamma)\geq_P L_{\gamma+1}^{-1}(N-\gamma-1)$. Notice that $A$ and $B$ are lower order ideals of $P$. Suppose first that $L^{-1}(N) \in X$. In this case, $L^{-1}_{N-1}(1) \in A$ if and only if $L^{-1}(N) \in A$, where the latter event happens with probability $|A|/|X|$.  For the remainder of the proof, we consider the case where $L^{-1}(N) \not\in X$.

By hypothesis, $L_{N-1}^{-1}(1)\in X$. Therefore, there exists a largest index $\gamma_1$ such that $L_{\gamma_1}^{-1}(N-\gamma_1) \notin X$.  By Lemma~\ref{prop:partition}, the value of $\gamma_1$ does not depend on how $L$ extends $M$.  Of course, we have $L_{\gamma_1+1}^{-1}(N-\gamma_1-1) \in X$ by the maximality of $\gamma_1$.  This implies that $L_{\gamma_1}^{-1}(1)\in X$. We now see that $L^{-1}_{\gamma_1+1}(N-\gamma_1-1) \in A$ if $L_{\gamma_1}^{-1}(1) \in A$ and that $L^{-1}_{\gamma_1+1}(N-\gamma_1-1) \in B$ if $L_{\gamma_1}^{-1}(1) \in B$.  Our problem is thus reduced to determining whether $L_{\gamma_1}^{-1}(1)$ is in $A$ or $B$.

If $L^{-1}(\gamma_1+1) \in X$ (whether or not this occurs depends only on $M$), then $L_{\gamma_1}^{-1}(1)\in A$ if and only if $L^{-1}(\gamma_1+1)\in A$. In this case, the probability that $L^{-1}(\gamma_1+1) \in A$ (which is equivalent to $L^{-1}_{N-1}(1) \in A$) is precisely $|A|/|X|$. We now restrict our attention to the case where $L^{-1}(\gamma_1+1) \not\in X$.  Since $L_{\gamma_1}^{-1}(1)\in X$, there must exist a largest index $\gamma_2<\gamma_1$ such that $L_{\gamma_2}^{-1}(\gamma_1+1-\gamma_2) \notin X$.  Analogously to above, we have $L_{\gamma_2+1}^{-1}(\gamma_1-\gamma_2) \in X$. Moreover, $L_{\gamma_2+1}^{-1}(\gamma_1-\gamma_2) \in A$ if and only if $L_{\gamma_2}^{-1}(1) \in A$.  So our problem is reduced to determining whether $L_{\gamma_2}^{-1}(1)$ is in $A$ or $B$.

We have $L_{\gamma_2}^{-1}(1)\in X$. As before, if $L^{-1}(\gamma_2+1) \in X$ (where the occurrence of this event depends only on $M$), then $L_{\gamma_2}^{-1}(1)\in A$ if and only if $L^{-1}(\gamma_2+1)\in A$. In this case, the probability that $L^{-1}(\gamma_2+1) \in A$ (which is equivalent to $L^{-1}_{N-1}(1) \in A$) is precisely $|A|/|X|$.  Otherwise, we define $\gamma_3<\gamma_2$ to be the largest index such that $L_{\gamma_3}^{-1}(\gamma_2+1-\gamma_3)\not\in X$. We continue in this manner: at each step, if $L^{-1}(\gamma_i+1)\not\in X$, then we define $\gamma_{i+1}<\gamma_i$ to be the largest index such that $L_{\gamma_{i+1}}^{-1}(\gamma_i+1-\gamma_{i+1})\not\in X$.  This process must end after finitely many steps because the $\gamma_i$'s are strictly decreasing nonnegative integers.  Regardless of how many steps this process takes to terminate, the probability that $L^{-1}_{N-1}(1) \in A$ is $|A|/|X|$, as desired.
\end{proof}

Putting these pieces together lets us complete the proof of Theorem~\ref{thm:inflated-tree}.

\begin{figure}[ht]
\begin{center}
\includegraphics[height=5.7cm]{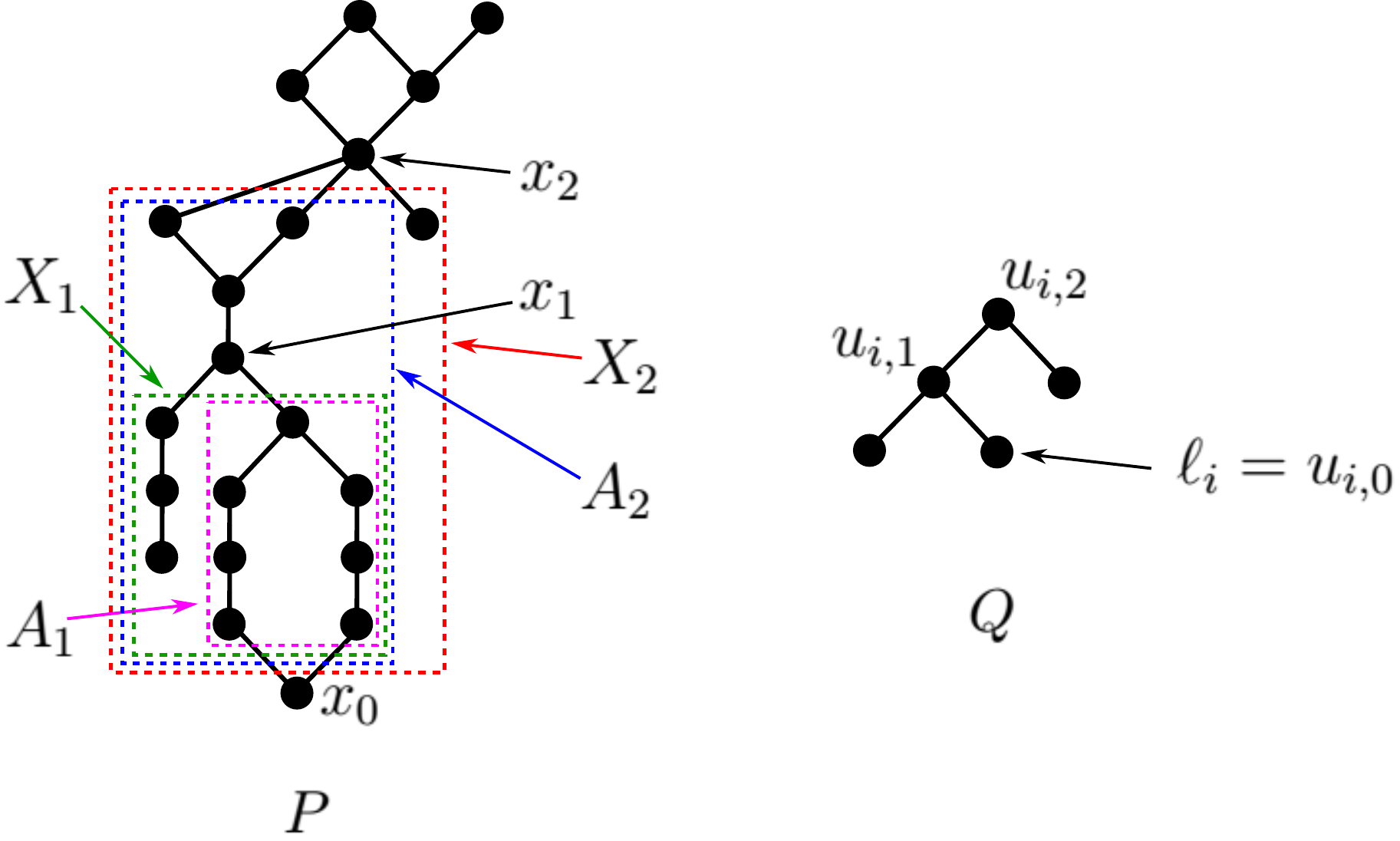}
\caption{An illustration of the notation used in the proof of Theorem~\ref{thm:inflated-tree} with the posets $P$ and $Q$ from Figure~\ref{Fig1}. We have fixed an arbitrary leaf $\ell_i$ and labeled the rest of the figure accordingly. In this example, $\omega(i)=2$. }
\label{Fig2}
\end{center}  
\end{figure}

\begin{proof}[Proof of Theorem~\ref{thm:inflated-tree}]
Fix a leaf $\ell_i$; we will count the tangled labelings $L$ such that the (unique) minimal element $x_0$ of $\varphi^{-1}(\ell_i)$ has the label $n$.  Note that by the proof of Lemma~\ref{lem:n-minimal}, we have $L_{n-2}(x_0)=2$. We know by Lemma~\ref{lem:ideals} that $L$ is tangled if and only if $L_{n-2}^{-1}(1)>_Px_0$. Because $Q$ is reduced, this occurs if and only if $L_{n-2}^{-1}(1) \in \varphi^{-1}(\ell_i)$.

Note that $\omega(i) \geq 1$ unless $Q$ is a $1$-element poset.  In this case (in which there is only $1$ leaf $\ell_i$ anyway), $\omega(\ell_i)=0$ and $\varphi^{-1}(\ell_i)=P$; it then follows that all labelings $L$ with $L(x_0)=n$ are tangled, and there are $(n-1)!$ such labelings.  For the remainder of this proof, we assume that $Q$ has at least $2$ elements, so that $\omega(i) \geq 1$.

Let $\widetilde P=P \setminus \{x_0\}$, and let $\widetilde{L}=L\vert_{\widetilde P}$.  Note that for $0 \leq \gamma \leq n-2$, we have $\st(L_{\gamma}\vert_{\widetilde P})=\widetilde{L}_{\gamma}$.  In particular, $L_{n-2}^{-1}(1) \in \varphi^{-1}(\ell_i)$ if and only if $\widetilde{L}_{n-2}^{-1}(1) \in \widetilde{\varphi}^{-1}(\ell_i)$, where $\widetilde{\varphi}=\varphi \vert_{\widetilde{P}}$.  We now choose $L$ uniformly at random among the $(n-1)!$ labelings $L: P \to [n]$ with $L(x_0)=n$, which induces the uniform distribution on labelings $\widetilde{L}: \widetilde{P} \to [n-1]$.

For each $1 \leq j \leq \omega(i)$, let $x_j$ be the unique minimal element of $\widetilde{\varphi}^{-1}(u_{i,j})$. Let \[X_j=\{y \in \widetilde{P}: y<_{\widetilde{P}} x_j\}\quad\text{and}\quad A_j=\bigcup_{v \leq_Q u_{i,j-1}} \widetilde{\varphi}^{-1}(v)\] (see Figure~\ref{Fig2}). Fix any injective map $$M_{\omega(i)}: \widetilde{P} \setminus X_{\omega(i)} \to [n-1],$$ and note that every labeling $\widetilde{L}:\widetilde{P} \to [n-1]$ with $\widetilde{L} \vert_{\widetilde{P} \setminus X_{\omega(i)}}=M_{\omega(i)}$ satisfies $\widetilde{L}_{n-2}^{-1}(1) \in X_{\omega(i)}$. Indeed, every minimal element of $P$ is in $X_{\omega(i)}$ since $x_{\omega(i)}$ is the minimal element of $\varphi^{-1}(u_{i,\omega(i)})$, where $u_{i,\omega(i)}$ is the root of $Q$ (and $|Q|\geq 2$). 
Applying Lemma~\ref{lem:fork-disribution} with $N=n-1$, $x=x_{\omega(i)}$, $X=X_{\omega(i)}$, $A=A_{\omega(i)}$, and $M=M_{\omega(i)}$, we see that the probability that $\widetilde{L}_{n-2}^{-1}(1) \in A_{\omega(i)}$ is
$$\frac{|A_{\omega(i)}|}{|X_{\omega(i)}|}=\frac{b_{i, \omega(i)}-1}{c_{i, \omega(i)}-1},$$
and Lemma~\ref{prop:partition} tells us that the occurrence of this event depends only on $L \vert_{\widetilde{P} \setminus A_{\omega(i)}}$.

Now, fix any injective map $$M_{\omega(i)-1}: \widetilde{P} \setminus X_{\omega(i)-1} \to [n-1]$$ such that every labeling $\widetilde{L}:\widetilde{P} \to [n-1]$ with $\widetilde{L} \vert_{\widetilde{P} \setminus X_{\omega(i)-1}}=M_{\omega(i)-1}$ satisfies $\widetilde{L}_{n-2}^{-1}(1) \in A_{\omega(i)}$ (and hence also $\widetilde{L}_{n-2}^{-1}(1) \in X_{\omega(i)-1}$).  We now consider the uniform distribution on such labelings $\widetilde{L}:\widetilde{P} \to [n-1]$ with $\widetilde{L} \vert_{\widetilde{P} \setminus X_{\omega(i)-1}}=M_{\omega(i)-1}$.  Another application of Lemma~\ref{lem:fork-disribution}, this time with $x=x_{\omega(i)-1}$, $X=X_{\omega(i)-1}$, $A=A_{\omega(i)-1}$, and $M=M_{\omega(i)-1}$, tells us that among these labelings, the probability that $\widetilde{L}_{n-2}^{-1}(1) \in A_{\omega(i)-1}$ is
$$\frac{|A_{\omega(i)-1}|}{|X_{\omega(i)-1}|}=\frac{b_{i, \omega(i)-1}-1}{c_{i, \omega(i)-1}-1},$$
where once again Lemma~\ref{prop:partition} tells us that the occurrence of this event depends only on $L \vert_{\widetilde{P} \setminus A_{\omega(i)-1}}$.  Continuing this process down to $j=1$ yields that the probability of $\widetilde{L}_{n-2}^{-1}(1) \in A_1= \widetilde\varphi^{-1}(\ell_i)$ is
$$\prod_{j=1}^{\omega(i)} \frac{b_{i,j}-1}{c_{i,j}-1},$$
and summing over the leaves of $P$ gives the result.
\end{proof}

\section{Sortable Labelings}\label{sec:sortable}
One of the most fundamental problems concerning any sorting procedure is that of enumerating the objects that require only one iteration of the procedure to become sorted. Hence, we define a labeling $L$ of a finite poset $P$ to be \emph{sortable} if $\partial(L)$ is a linear extension. The set of sortable labelings is precisely the set of preimages under $\partial$ of the set of linear extensions; note that this contains the linear extensions themselves. Consequently, it will be useful to have a way of understanding the preimages of an arbitrary labeling under $\partial$. 

Given a labeling $L$ of a poset $P$, we say an element $x$ of $P$ is \emph{$L$-golden} if for every $y\in P$ with $y>_Px$, we have $L(y)>L(x)$. We say a chain in $P$ is $L$-golden if all of its elements are $L$-golden. 

\begin{proposition}\label{Prop4}
Let $P$ be an $n$-element poset. A labeling $L$ of $P$ is in the image of $\partial$ if and only if $L^{-1}(n)$ is a maximal element of $P$. If $L$ is in the image of $\partial$, then $|\partial^{-1}(L)|$ is equal to the number of $L$-golden chains of $P$ that contain $L^{-1}(n)$. 
\end{proposition}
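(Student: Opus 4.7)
The plan is to establish an explicit bijection between preimages of $L$ under $\partial$ and $L$-golden chains of $P$ containing $L^{-1}(n)$, which simultaneously yields both assertions. The forward direction of the image characterization is immediate: if $L = \partial(M)$ and $v_1 <_P \cdots <_P v_m$ is the promotion chain of $M$, then by the definition of $\partial$ we have $L(v_m) = n$, and $v_m$ is maximal by construction of the promotion chain.

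For the bijection, given an $L$-golden chain $v_1 <_P \cdots <_P v_m = L^{-1}(n)$, I will define a labeling $M \in \Lambda(P)$ by
\[
M(v_1) = 1, \qquad M(v_{i+1}) = L(v_i) + 1 \text{ for } 1 \leq i \leq m-1, \qquad M(x) = L(x) + 1 \text{ for } x \notin \{v_1, \ldots, v_m\}.
\]
Since $L$ is a bijection and $v_m = L^{-1}(n)$, the values listed above together give the set $\{1\} \cup \{L(x)+1 : x \neq v_m\} = [n]$, so $M \in \Lambda(P)$. The key step is to verify that the promotion chain of $M$ is exactly $v_1 <_P \cdots <_P v_m$. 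Clearly $M^{-1}(1) = v_1$ and $v_m$ is maximal. To see that $v_{i+1}$ is the $M$-successor of $v_i$, I consider any $y >_P v_i$ with $y \neq v_{i+1}$: if $y \notin \{v_1,\ldots,v_m\}$, then $L$-goldenness of $v_i$ gives $M(y) = L(y)+1 > L(v_i)+1 = M(v_{i+1})$; if $y = v_j$ for some $j \geq i+2$, then $v_{j-1} >_P v_i$ is in the chain, so $L$-goldenness of $v_i$ gives $M(v_j) = L(v_{j-1})+1 > L(v_i)+1 = M(v_{i+1})$. A direct computation using the formulas then shows $\partial(M) = L$.

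Conversely, suppose $\partial(M) = L$ and let $v_1 <_P \cdots <_P v_m$ be the promotion chain of $M$. Unwinding the definition of $\partial$ shows that $M$ is recovered from $(v_1,\ldots,v_m)$ by exactly the formulas above, so the construction is injective and surjective once we establish that each $v_i$ is $L$-golden. Fix $i$ and $y >_P v_i$. If $y \notin \{v_1,\ldots,v_m\}$, then since $v_{i+1}$ is the unique $M$-minimizer among elements above $v_i$, we get $M(y) > M(v_{i+1})$ and hence $L(y) = M(y)-1 > M(v_{i+1})-1 = L(v_i)$. If $y = v_j$ with $j > i$ and $j < m$, then $v_{j+1} >_P v_i$ and $v_{j+1} \neq v_{i+1}$, so $M(v_{j+1}) > M(v_{i+1})$, which gives $L(v_j) = M(v_{j+1})-1 > L(v_i)$; if $j = m$, then $L(v_m) = n > L(v_i)$ directly.

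The bijection is now established, proving the enumerative statement. For the remaining half of the image characterization, note that whenever $L^{-1}(n)$ is maximal, the single-element chain $\{L^{-1}(n)\}$ is vacuously $L$-golden, so the bijection produces at least one preimage. The main obstacle is the bookkeeping in the case analysis showing that the promotion chain of the constructed $M$ equals the given $L$-golden chain; this is precisely where the $L$-golden condition is used in a nontrivial way, since one must rule out both chain and non-chain elements as spurious $M$-successors of each $v_i$.
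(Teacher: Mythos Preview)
Your proof is correct and follows essentially the same approach as the paper's: both set up the explicit bijection between $\partial^{-1}(L)$ and $L$-golden chains containing $L^{-1}(n)$ via the promotion chain, using the same inverse formula for $M$ (the paper's $L'$). Your write-up is somewhat more detailed in verifying that the promotion chain of the constructed $M$ coincides with the given golden chain, carrying out the case analysis that the paper leaves as ``straightforward to check.''
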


\begin{proof}
It is clear that every labeling in the image of $\partial$ assigns the label $n$ to a maximal element of $\partial$. Now suppose $L$ is a labeling such that $L^{-1}(n)$ is maximal in $P$. We will prove that $|\partial^{-1}(L)|$ is the number of $L$-golden chains of $P$ that contain $L^{-1}(n)$. One such golden chain is the singleton chain $\{L^{-1}(n)\}$, so this will prove that $L$ is in the image of $\partial$. 

Suppose $L'\in\partial^{-1}(L)$, and let $C_{L'}=\{v_1<_P \cdots <_P v_m\}$ be the promotion chain of $L'$. Consider an index $i\in\{1,\ldots, m-1\}$, and let $y\in P$ be such that $y>_P v_i$. If $y\notin C_{L'}$, then $L'(v_{i+1})<L'(y)$ because $v_{i+1}$ is the $L'$-successor of $v_i$. In this case, $L(v_i)=L'(v_{i+1})-1<L'(y)-1=L(y)$. On the other hand, if $y\in C_{L'}$, then $L(y)>L(v_i)$ because the labels of the promotion chain remain in increasing order after we apply $\partial$ to $L'$. This proves that $v_i$ is $L$-golden. Notice also that $v_m=L^{-1}(n)$ is $L$-golden because it is maximal in $P$. Hence, $C_{L'}$ is an $L$-golden chain containing $L^{-1}(n)$. Thus, we have a map $L'\mapsto C_{L'}$ from $\partial^{-1}(L)$ to the set of $L$-golden chains containing $L^{-1}(n)$. This map is certainly injective because a labeling $L'$ is determined by its promotion chain and the labeling $\partial(L')$. 

It remains to show that the above map from $\partial^{-1}(L)$ to the set of $L$-golden chains containing $L^{-1}(n)$ is surjective, To this end, choose an $L$-golden chain $C=\{v_1<_P\cdots<_Pv_m\}$ with $L^{-1}(n)\in C$. We must have $L(v_m)=n$. For each $x\in P$, let \[L'(x)=\begin{cases} L(x)+1, & \mbox{if } x\not\in C; \\ L(v_{i-1})+1, & \mbox{if } x=v_i\text{ for some }i\in\{2,\ldots,m\}; \\ 1, & \mbox{if } x=v_1. \end{cases}\] It is straightforward to check that $L'$ is a labeling in $\partial^{-1}(L)$ such that $C_{L'}=C$. 
\end{proof}

\begin{theorem}
Let $P$ be an $n$-element poset, and let $\mathcal M$ denote the set of maximal elements of $P$. For each $x\in \mathcal M$, let $\mathscr C_x$ be the number of chains of $P$ that contain $x$. The number of sortable labelings of $P$ is \[\sum_{x\in\mathcal M}\mathscr C_{x}|\mathcal L(P\setminus\{x\})|.\]
\end{theorem}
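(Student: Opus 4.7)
The plan is to count sortable labelings by partitioning them according to which linear extension they promote to. Since $L$ is sortable precisely when $\partial(L)\in\mathcal{L}(P)$, we have
\[
\#\{\text{sortable labelings of }P\}\;=\;\sum_{M\in\mathcal{L}(P)}|\partial^{-1}(M)|,
\]
so the key is to evaluate $|\partial^{-1}(M)|$ for each linear extension $M$ using Proposition~\ref{Prop4}.

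First I would observe that when $M$ is a linear extension, every element of $P$ is $M$-golden. Indeed, if $x\in P$ and $y>_P x$, then the defining property of a linear extension gives $M(y)>M(x)$ directly. Consequently, every chain of $P$ is $M$-golden. Proposition~\ref{Prop4} then states that $|\partial^{-1}(M)|$ equals the number of $M$-golden chains containing $M^{-1}(n)$, which in this case is simply $\mathscr{C}_{M^{-1}(n)}$, the total number of chains of $P$ containing $M^{-1}(n)$.

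Next, I would group linear extensions by the element receiving the largest label. Since $M\in\mathcal{L}(P)$ forces $M^{-1}(n)$ to be a maximal element of $P$, I can write
\[
\sum_{M\in\mathcal{L}(P)}\mathscr{C}_{M^{-1}(n)}\;=\;\sum_{x\in\mathcal{M}}\mathscr{C}_x\cdot\#\{M\in\mathcal{L}(P):M^{-1}(n)=x\}.
\]
The restriction map $M\mapsto M\vert_{P\setminus\{x\}}$ (composed with standardization) is a bijection between linear extensions of $P$ sending $n$ to $x$ and linear extensions of $P\setminus\{x\}$, which gives $\#\{M\in\mathcal{L}(P):M^{-1}(n)=x\}=|\mathcal{L}(P\setminus\{x\})|$. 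Combining these pieces yields the desired formula.

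The only nontrivial ingredient here is the golden-chain observation; once we know that every chain through $M^{-1}(n)$ is $M$-golden when $M$ is a linear extension, the remainder is routine bookkeeping using Proposition~\ref{Prop4} and the standard bijection between linear extensions of $P$ with maximum element at a fixed maximal $x$ and linear extensions of $P\setminus\{x\}$. I don't anticipate any serious obstacle.
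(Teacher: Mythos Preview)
Your proposal is correct and follows essentially the same route as the paper: both compute $|\partial^{-1}(M)|$ via Proposition~\ref{Prop4} by noting that every element (hence every chain) is $M$-golden when $M$ is a linear extension, then group linear extensions by $M^{-1}(n)\in\mathcal M$ and invoke the bijection $M\mapsto M\vert_{P\setminus\{x\}}$ with $\mathcal L(P\setminus\{x\})$. The only cosmetic difference is the order of summation, and your mention of standardization is harmless but unnecessary since removing the element labeled $n$ already yields a map onto $[n-1]$.
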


\begin{proof}
For each $x\in\mathcal M$, let $\mathcal L_x(P)$ be the set of linear extensions $L$ of $P$ such that $L(x)=n$. The collection $\{\mathcal L_x(P):x\in\mathcal M\}$ is a partition of $\mathcal L(P)$, so the number of sortable labelings of $P$ is \[|\partial^{-1}(\mathcal L(P))|=\sum_{x\in\mathcal M}|\partial^{-1}(\mathcal L_x(P))|.\] Notice that if $L$ is a linear extension of $P$, then every element of $P$ is $L$-golden. This implies that every chain of $P$ is $L$-golden, so $\mathscr C_x$ is the number of $L$-golden chains containing $x$. Consequently, it follows from Proposition~\ref{Prop4} that $|\partial^{-1}(L)|=\mathscr C_x$ for every $L\in\mathcal L_x(P)$. Hence, the number of sortable labelings of $P$ is $\sum_{x\in\mathcal M}\mathscr C_x|\mathcal L_x(P)|$. The desired result now follows from the observation that the map $L\mapsto L\vert_{P\setminus\{x\}}$ is a bijection from $\mathcal L_x(P)$ to $\mathcal L(P\setminus\{x\})$.
\end{proof}

\section{Open Problems}\label{sec:conclusion}
We have only scratched the surface of the investigation of extended promotion. In light of Theorem~\ref{thm:inflated-tree}, it would be nice to enumerate tangled labelings of other posets that are not inflated rooted forests.  We have tried to prove theorems that apply to large classes of posets, but it could also be interesting to consider more intricate questions for narrower classes of posets. For example, it is natural to ask for the expected number of iterations of $\partial$ needed to send a random labeling of a poset $P$ to a linear extension. We expect that obtaining specific information about this expected value for a vast collection of posets could be quite difficult. However, it would still be interesting to find nontrivial estimates (or even exact values) of these expected values for some narrow classes of posets. Two classes of posets that are natural candidates are products of chains and rooted tree posets.

Usually, when faced with a sorting map, one tries to understand the objects that are sorted via one iteration, as we did in Section~\ref{sec:sortable}. The next natural problem is then to understand the objects that can be sorted with two iterations. Let us say a labeling $L$ of a poset $P$ is \emph{2-promotion-sortable} if $\partial^2(L)\in\mathcal L(P)$. It would be interesting to explicitly enumerate $2$-promotion-sortable labelings for some specific classes of posets such as products of chains or rooted tree posets.  This is, of course, equivalent to asking for the number of preimages of the sortable labelings. It could also be interesting to find general estimates for the number of $2$-promotion-sortable labelings of arbitrary posets.  

The following conjecture is motivated by Corollary~\ref{Cor1}, which tells us that if $P$ is an $n$-element poset with a unique minimal element, then the number of tangled labelings of $P$ is $(n-1)!$. 

\begin{conjecture} 
If $P$ is an $n$-element poset, then the number of tangled labelings of $P$ is at most $(n-1)!$. 
\end{conjecture}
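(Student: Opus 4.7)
The plan is to proceed by strong induction on $n = |P|$, with the base cases $n \leq 2$ immediate. Assume $n \geq 3$ and that the bound holds for all posets of size less than $n$.

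If $P$ is disconnected with components $P_1, \ldots, P_r$ of sizes $n_1, \ldots, n_r$, then Theorem~\ref{thm:connectedreduction} gives
\[t(P) = (n-2)! \sum_{i=1}^r \frac{t_i}{(n_i - 2)!}.\]
The inductive hypothesis $t_i \leq (n_i - 1)!$ yields $t_i/(n_i - 2)! \leq n_i - 1$ (interpreting the $n_i = 1$ term as zero), so $t(P) \leq (n-2)!(n - r) \leq (n-1)!$. Hence we may assume $P$ is connected. If $P$ has a unique minimum $m_0$, then Lemma~\ref{lem:n-minimal} forces $L^{-1}(n) = m_0$ for every tangled $L$, and the number of such labelings is at most $(n-1)!$.

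The genuinely difficult case is a connected poset with multiple minima. For each minimal $m$, set $\tilde P_m = P \setminus \{m\}$ and $N_m(P) = \{x \in \min(\tilde P_m) : m <_P x\}$; equivalently, $N_m(P)$ is the set of elements of $P$ whose unique strict lower bound is $m$. The key structural observation is that if $L(m) = n$ then, because $m$ is minimal and can never appear as a successor in a promotion chain, one has $L_\gamma(m) = n - \gamma$ and $\st(L_\gamma|_{\tilde P_m}) = \partial_{\tilde P_m}^\gamma(L|_{\tilde P_m})$ for all $\gamma \leq n-2$. In particular, $\partial_{\tilde P_m}^{n-2}(L|_{\tilde P_m})$ is a linear extension of $\tilde P_m$, and $L$ is tangled precisely when the minimum of this linear extension lies in $N_m(P)$. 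Letting $q_Q(x)$ denote the number of labelings of $Q$ whose full promotion-sort has minimum $x$, we obtain
\[t(P) = \sum_{m \in \min(P)} \sum_{x \in N_m(P)} q_{\tilde P_m}(x).\]
Since $\sum_{x \in \min(\tilde P_m)} q_{\tilde P_m}(x) = (n-1)!$ for each fixed $m$, the trivial bound is $t(P) \leq |\min(P)| \cdot (n-1)!$, and the conjecture amounts to saving this factor of $|\min(P)|$.

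The hard part is producing that savings. One natural strategy is to construct an injection from the tangled labelings of $P$ into the $(n-1)!$-element set of labelings of $P$ that assign label $n$ to a fixed minimal element $m_0$; the naive transposition $L^{-1}(n) \leftrightarrow m_0$ is not injective on the tangled subset, so one would need a more refined canonical form, perhaps exploiting the full promotion trajectory or using Lemma~\ref{Lem4} to constrain the connected components that can be linked. A second approach would be to imitate the telescoping argument implicit in Theorem~\ref{thm:inflated-tree}: in the inflated-rooted-tree setting, one can verify the bound $\sum_i \prod_j (b_{i,j}-1)/(c_{i,j}-1) \leq 1$ by a clean leaf-to-root induction using only that each internal node of the reduced tree has at least two children. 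Extending either approach to arbitrary connected posets with multiple minima appears to require a genuinely new idea, and this is where I expect the essential difficulty to lie.
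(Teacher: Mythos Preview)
This statement is a \emph{conjecture} in the paper, appearing in Section~\ref{sec:conclusion} (Open Problems); the paper does not prove it, so there is no paper proof to compare against.

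Your proposal is not a proof either, and you say so yourself. You correctly handle the disconnected case via Theorem~\ref{thm:connectedreduction} and the unique-minimum case via Lemma~\ref{lem:n-minimal} (indeed, Corollary~\ref{Cor1} already gives exactly $(n-1)!$ in the latter situation). But for the remaining case---a connected poset with several minimal elements---you arrive only at the trivial bound $|\min(P)|\cdot(n-1)!$ and then explicitly state that closing the gap ``appears to require a genuinely new idea.'' That is precisely the content of the conjecture: the easy reductions were already available to the authors, and the connected multi-minimum case is the obstruction. Your write-up is an honest reduction of the problem, not a proof.

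One small technical point: in your decomposition $t(P)=\sum_{m}\sum_{x\in N_m(P)} q_{\tilde P_m}(x)$, the claim that $L$ is tangled iff the minimum of $\partial_{\tilde P_m}^{\,n-2}(L\vert_{\tilde P_m})$ lies in $N_m(P)$ deserves more care. You need that this minimum is the element receiving label $1$ in $L_{n-2}$, and that tangledness is equivalent to $L_{n-2}^{-1}(1)>_P m$; this follows from Lemma~\ref{lem:ideals} together with the argument in the proof of Lemma~\ref{lem:n-minimal}, but it is not quite immediate from the statements you cite.
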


Recall that a sequence of real numbers $a_0,a_1,\ldots,a_{n-1}$ is called \emph{unimodal} if there is an index $j$ such that $a_0\leq a_1\leq\cdots\leq a_{j-1}\leq a_j\geq a_{j+1}\geq\cdots a_{n-1}$. We say this sequence is \emph{log-concave} if $a_{i-1}a_{i+1}\leq a_i^2$ for all $i\in\{1,\ldots,n-2\}$. Let $P$ be an $n$-element poset, and let $a_k(P)$ denote the number of labelings $L\in\Lambda(P)$ such that $\partial^k(L)\in\mathcal L(P)$. Let $\widehat a_0(P)=a_0(P)=|\mathcal L(P)|$. For $k\geq 1$, let $\widehat a_k(P)=a_k(P)-a_{k-1}(P)$. 

\begin{conjecture}\label{Conj1}
Let $P$ be an $n$-element poset. The sequence $\widehat a_0(P),\widehat a_1(P),\ldots,\widehat a_{n-1}(P)$ is unimodal.  
\end{conjecture}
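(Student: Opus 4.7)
My plan is to work directly with the sorting-time statistic $\nu(L):=\min\{k:\partial^k(L)\in\mathcal L(P)\}$, so that $\widehat a_k(P)=|T_k|$ where $T_k=\{L\in\Lambda(P):\nu(L)=k\}$. The first step is to set up the basic recursion. Since $\partial(L')=L$ with $L'\notin\mathcal L(P)$ forces $\nu(L')=\nu(L)+1$, the map $\partial\colon T_{k+1}\to T_k$ is a well-defined surjection for $k\geq 1$ whose fibers are the full preimage sets $\partial^{-1}(L)$. This yields
\[|T_{k+1}|=\sum_{L\in T_k}|\partial^{-1}(L)|\quad(k\geq 1),\qquad |T_1|=\sum_{L\in T_0}\bigl(|\partial^{-1}(L)|-1\bigr),\]
where the subtraction accounts for the unique preimage of each linear extension that lies in $\mathcal L(P)$. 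By Proposition~\ref{Prop4}, each summand equals the number of $L$-golden chains through $L^{-1}(n)$, and it vanishes whenever $L^{-1}(n)$ is not a maximal element of $P$.

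The second step is to translate unimodality of $(|T_k|)_k$ into monotonic behavior of the mean fiber size $\bar\gamma_k:=|T_{k+1}|/|T_k|$: the sequence is unimodal iff $\bar\gamma_k$ crosses the value $1$ at most once (from above to below). For the descending part of the conjectured distribution, I would leverage the mechanism behind Lemma~\ref{lem:n-minimal}: when $L$ has large sorting time, $L^{-1}(n)$ is forced to lie in (or close to) the minimum stratum of $P$, so a large fraction of labelings in $T_k$ for $k$ near $n-1$ contribute zero to $|T_{k+1}|$, forcing $\bar\gamma_k<1$ in the right tail. For the ascending part, I would try to show that labelings with small sorting time admit many $L$-golden chains through $L^{-1}(n)$, using that (via Lemma~\ref{Lem2}) their frozen upper order ideals are large and hence carry many saturated chains up to a maximal element.

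A complementary angle I would pursue is the toggle decomposition $\partial=\tau_{n-1}\circ\cdots\circ\tau_1$ from Proposition~\ref{Prop1}. The involutivity of the $\tau_i$ might allow one to construct explicit injections $T_k\hookrightarrow T_{k+1}$ in the ascending regime by performing a controlled swap along a preimage's promotion chain. I would also attempt an inductive approach: reducing to connected posets via a variant of Theorem~\ref{thm:connectedreduction} applied to the joint distribution of $\nu$ across components, and then further reducing connected posets by removing a maximal element and relating $\nu(L)$ to that of $L\vert_{P\setminus\{L^{-1}(n)\}}$.

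The main obstacle is that the fiber sizes $|\partial^{-1}(L)|$ vary wildly across labelings within a single $T_k$: some admit several $L$-golden chains through $L^{-1}(n)$, while others contribute nothing because $L^{-1}(n)$ is not maximal. A uniform injection is therefore unlikely to exist, and any proof must aggregate preimages subtly rather than match them pointwise. As a pragmatic warm-up, I would first verify the conjecture on chains (where $\partial$ is bubble sort and the distribution of passes is classical), on inflated rooted forest posets (where Theorem~\ref{thm:inflated-tree} pins down the top coefficient $\widehat a_{n-1}$), and on small products of two chains, hoping that these examples reveal the correct aggregation scheme to generalize.
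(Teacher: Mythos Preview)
This statement is Conjecture~\ref{Conj1}, placed in the paper's Open Problems section; the paper does \emph{not} prove it. The only supporting evidence offered is the remark that it holds when $P$ is a chain, where the explicit formula $a_k(P)=(k+1)^{n-k-1}(k+1)!$ from Knuth makes the check routine. Your proposal is therefore not competing against any argument in the paper; it is an attack on an open problem, and indeed you present it as a research plan (``I would try to show'', ``I would pursue'', ``the main obstacle is'') rather than as a proof.

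One concrete error worth flagging: the map $\partial\colon T_{k+1}\to T_k$ is \emph{not} a surjection in general. Already for the $3$-element chain one has $T_1=\{132,213,231\}$ and $T_2=\{312,321\}$, and $\partial$ sends both elements of $T_2$ to $213$; the labelings $132$ and $231$ have $L^{-1}(3)$ non-maximal and hence (by Proposition~\ref{Prop4}) lie outside the image of $\partial$ entirely. Your recursion $|T_{k+1}|=\sum_{L\in T_k}|\partial^{-1}(L)|$ survives this correction, since the missing fibers simply contribute zero, but the failure of surjectivity is a warning sign: a nonnegligible fraction of $T_k$ can have empty preimage even for moderate $k$, so the heuristic that $L^{-1}(n)$ is forced toward the minimal stratum only near the top of the sorting-time range is too coarse. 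Likewise, the link you suggest between large frozen sets and many $L$-golden chains through $L^{-1}(n)$ is loose: frozen elements carry the \emph{largest} labels, whereas $L$-golden elements are those whose label is smaller than every label above them, and the two notions need not align outside $\mathcal L(P)$. As written, the proposal identifies reasonable bookkeeping quantities (fiber sizes, the mean ratio $\bar\gamma_k$) but supplies no mechanism forcing a single crossing of~$1$; absent that, it remains a plausible outline rather than a proof, which is consistent with the conjecture's status in the paper.
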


\begin{question}
Is it true that for every $n$-element poset $P$, the sequence $a_0(P),a_1(P),\ldots,a_{n-1}(P)$ is log-concave? 
\end{question}

When $P$ is an $n$-element chain, it follows from Remark~\ref{Rem2} that $a_k(P)$ is the number of permutations $\pi\in S_n$ such that ${\bf B}^k(\pi)=123\cdots n$, where ${\bf B}$ is the bubble sort map. It follows from the results in \cite[pages 106--110]{Knuth2} that $a_k(P)=(k+1)^{n-k-1}(k+1)!$ for each $1\leq k\leq n-1$. Using this, one can prove that Conjecture~\ref{Conj1} holds when $P$ is a chain. Furthermore, the sequence $a_0(P),a_1(P),\ldots,a_{n-1}(P)$ is log-concave.  

\section*{Acknowledgements}\label{sec:acknowledgements}

We are grateful to James Propp for suggesting the idea to define and investigating an extension of promotion. We also thank Brice Huang for engaging in helpful discussions about previous work on promotion.  The first author was supported by a Fannie and John Hertz Foundation Fellowship and an NSF Graduate Research Fellowship.

\end{document}